\newtheorem{theorem}{Theorem}
\newtheorem{lemma}{Lemma}
\newtheorem{proposition}{Proposition}
\newtheorem*{question}{Question}
\theoremstyle{remark}
\newtheorem{remark}{Remark}
\theoremstyle{definition}
\newtheorem{definition}{Definition}
\newcommand{\R}{\mathbb{R}}
\newcommand{\Z}{\mathbb{Z}}
\newcommand{\ZZ}{\widetilde{\mathbb{Z}}}
\newcommand{\del}{\smash{\mskip3mu\lower1truept\hbox{\vdots}\mskip3mu}}
\newcommand{\es}{\varnothing}
\renewcommand{\phi}{\varphi}
\newcommand{\Id}{\mathrm{Id}}
\newcommand{\sk}{\mathrm{sk}}
\newcommand{\Iso}{\mathrm{Iso}}
\newcommand{\ee}{\widetilde e}
\begin{document}

\author{Andrey Ryabichev}
\title{Eliashberg's $h$-principle and generic maps of surfaces with prescribed singular locus%
\footnote{Keywords: stable map; fold; cusp; $h$-principle.}}
\date{}


\maketitle

\begin{abstract}
We extend Y.\,Eliashberg's $h$-principle to smooth maps of surfaces which are allowed to have cusp singularities, as well as folds. More precisely, we prove a necessary and sufficient condition for a given map of surfaces to be homotopic to one with given loci of folds and cusps. Then we use these results to obtain a necessary and sufficient condition for a subset of a surface $M$ to be realizable as the critical set of some generic smooth map from $M$ to a given surface~$N$.
\end{abstract}

\section{Introduction}

All manifolds and maps between them are assumed to be infinitely smooth unless we explicitly specify otherwise.
Also all manifolds are assumed to have no boundary and to be equipped with a Riemannian structure.
All submanifolds we consider will be closed without boundary but not necessary compact.
The word ``homologous'' always means ``homologous modulo two''.
If $Y$ is a codimension $k$ submanifold of $X$, then we denote
the Poincar\'e dual class of $Y$ in $H^k(X,\Z_2)$ by $[Y]$.
The symbol ``$\simeq$'' means ``isomorphic'' (groups, vector bundles, etc.).
The symbol ``$=$'' for topological spaces means ``homeomorphic''.


\vspace{.5em}

Let $M$ and $N$ be connected compact 2-manifolds.
We will call such manifolds {\it closed surfaces}.
A map $f:M\to N$ is called {\it generic} if all critical points of $f$ are folds or cusps.
Such maps form a dense open subset in the space of smooth maps from $M$ to $N$, see~\cite{whitney} 
 for details.

\begin{figure}[h]
\center{\includegraphics{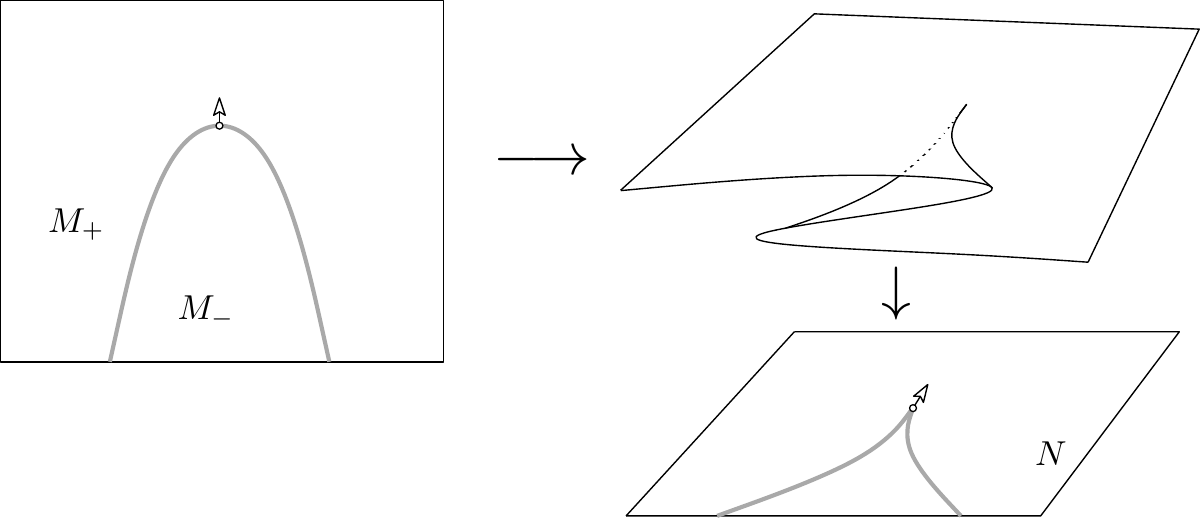}}
\caption{The direction vector of a negative cusp.}\label{fig:cusp-vector}
\end{figure}

For a map $f:M\to N$ we denote its set of critical points by $\Sigma(f)$.
If $f$ is generic, then $\Sigma(f)\subset M$ is a smooth closed 1-submanifold and
the set of cusp points is a discrete subset of $\Sigma(f)$.
For every cusp point we can choose a unit normal vector to $\Sigma(f)$ such that
the image of this vector under $df$ is directed outwards with respect to the cusp, see~Fig.~\ref{fig:cusp-vector}.
Such a vector is called {\it the direction of the cusp point}. 

Suppose we are given a smooth 1-submanifold $C\subset M$ and a discrete subset $P\subset C$.
Suppose at every point of $P$ a unit normal vector to $C$ is chosen.
We call such vectors {\it directions}.
Then a {\it $(C,P)$-immersion} is a generic map $f:M\to N$
with the set of folds equal $C\setminus P$
and the set of cusps (with given directions) equal $P$.
In this paper we answer the following question.

\begin{question}
Under which conditions on a nonempty closed 1-submanifold $C\subset M$ and
on a discrete subset $P\subset C$ with given directions is there a $(C,P)$-immersion $f:M\to N$?
\end{question}

If $C=\es$, the answer of this question is well known since in that case $f$ must be just a covering map. So from now on we assume $C\ne\es$.

Suppose $[C]=0\in H^1(M;\Z_2)$.
Then clearly $C$ separates $M$ into two parts, say $M_+$ and $M_-$, such that $\partial M_+=\partial M_-=C$.
This allows us to distinguish between {\it positive} and {\it negative} elements of $P$ when $[C]=0$.
More precisely, an element $p\in P$ is called {\it positive} if its direction vector is directed into $M_-$,
or {\it negative} if its direction vector is directed into $M_+$.
We set $n_+$ (respectively $n_-$) to be the number of the positive (respectively negative) elements of $P$.
Note that to define $n_+$ and $n_-$ we need to choose which of the parts of $M\setminus C$ is $M_+$ and which one is $M_-$.
Similarly, we can distinguish between {\it positive cusps} and {\it negative cusps} of a generic map~$f$ if $[\Sigma(f)]=0$.
For example, the cusp in Figure~\ref{fig:cusp-vector} is negative.


\begin{theorem}\label{th:homotopic}
Suppose we are given closed surfaces $M,N$, a nonempty closed 1-submanifold $C\subset M$,
a discrete subset $P\subset C$ with chosen directions and a continuous map $f:M\to N$.
Then $f$ is homotopic to a $(C,P)$-immersion if and only if all of the following conditions hold:
\begin{enumerate}
\item[1.1] $[C]=w_1(M)+f^*w_1(N)\in H^1(M;\Z_2)$;
\item[1.2] $[P]=w_2(M)+f^*w_2(N)\in H^2(M,\Z_2)$;
\item[1.3] if $[C]=0$, then $\big|\chi(M_+)-\chi(M_-)-n_+ +n_-\big| = |\deg f\cdot\chi(N)|$.
\end{enumerate}
\end{theorem}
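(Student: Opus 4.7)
The plan is to prove necessity by direct characteristic-class and degree computations, and sufficiency by constructing a formal $(C,P)$-immersion covering $f$ and then applying an extension of Eliashberg's $h$-principle to cusps.

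For necessity, assume $g$ is a $(C,P)$-immersion homotopic to $f$. By the Thom--Porteous formula applied to the bundle morphism $dg\colon TM\to g^*TN$, the critical set $\Sigma(g)=C$ is Poincar\'e dual to $w_1(TM)+g^*w_1(TN)=w_1(M)+f^*w_1(N)$, which is 1.1. The cusp set $P$ is the secondary degeneracy locus and is Poincar\'e dual to $w_2(TM)+g^*w_2(TN)$, giving 1.2. For 1.3, if $[C]=0$ then $M=M_+\cup_C M_-$, with $g|_{\mathrm{int}\,M_\pm}$ a local diffeomorphism. Counting sheets of $g$ over a generic point of $N$ and tracking cusp contributions with the sign convention of the direction vectors (positive cusps add a local sheet to $M_+$, negative cusps to $M_-$) yields a Riemann--Hurwitz identity $\chi(M_+)-\chi(M_-)-n_+ +n_- = \pm\deg f\cdot\chi(N)$, and taking absolute values gives 1.3.

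For sufficiency I would first produce a \emph{formal} $(C,P)$-immersion covering $f$: a bundle homomorphism $F\colon TM\to f^*TN$ degenerate precisely along $C$, of fold type along $C\setminus P$, and of cusp type with the prescribed direction at each point of $P$. A direct obstruction-theory argument shows that the two obstructions to constructing such an $F$ are exactly $[C]-w_1(M)-f^*w_1(N)\in H^1(M;\Z_2)$ and $[P]-w_2(M)-f^*w_2(N)\in H^2(M;\Z_2)$, so 1.1 and 1.2 are precisely what is needed. Then I would invoke an $h$-principle extending Eliashberg's theorem: any formal $(C,P)$-immersion is homotopic, through formal solutions, to the $1$-jet extension of a genuine $(C,P)$-immersion $g$ homotopic to $f$. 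When $[C]\ne0$ this finishes the argument. When $[C]=0$, the resulting $g$ has well-defined restriction degrees $d_\pm$ on $M_\pm$, and these are rigidly determined by $\deg f$ together with the Riemann--Hurwitz relation above; 1.3 is exactly the compatibility that lets the pair $(d_+,d_-)$ be realized within the homotopy class of $f$.

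The principal obstacle is the $h$-principle step, since Eliashberg's original theorem treats only fold maps. Following his scheme, excess critical behaviour is removed by local surgeries supported in small disks; the new ingredient here is to track the direction vector at each prescribed cusp through every homotopy, and to ensure that cusps are neither created nor destroyed away from $P$. A secondary subtlety is the $[C]=0$ case, where 1.3 is a purely numerical compatibility and must be shown sufficient: once the admissible pair $(d_+,d_-)$ is fixed, one has to verify that the corresponding formal $(C,P)$-immersion can actually be constructed in the homotopy class of $f$ rather than only in some nearby class.
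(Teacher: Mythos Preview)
Your overall architecture is close to the paper's, but there are two genuine gaps in the sufficiency argument.

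\textbf{The obstruction count is wrong when $[C]=0$.} You claim that the only obstructions to building the formal solution $F$ are the two $\Z_2$-classes in 1.1 and 1.2, and that 1.3 enters only afterward as a ``numerical compatibility'' for some pair $(d_+,d_-)$. This is not correct. A formal $(C,P)$-immersion covering $f$ amounts, after unwinding the local models, to a fiberwise isomorphism between the twisted bundle $T^{C,P}M$ and $f^*(TN)$. Rank~2 bundles over $M$ are classified by $w_1$ together with the Euler class in $H^2(M;\ZZ_E)$. When $[C]=0$ this group is $\Z$, not $\Z_2$, so the secondary obstruction to $T^{C,P}M\simeq f^*(TN)$ is an \emph{integer}: it is $\ee(T^{C,P}M)\mp\deg f\cdot\chi(N)$, and the paper computes $\ee(T^{C,P}M)=\pm(\chi(M_+)-\chi(M_-)-n_++n_-)$. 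Thus 1.3 is exactly the vanishing of the obstruction to the formal solution, and 1.2 is merely its mod~2 reduction in this case. With only 1.1 and 1.2 you will in general \emph{not} be able to construct $F$, so your later ``compatibility'' step never gets off the ground.

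\textbf{You do not need to extend Eliashberg's theorem to cusps.} You identify this extension as the principal obstacle, and your sketch of it is not a proof. The paper avoids this entirely. Once the bundle isomorphism $\psi:T^{C,P}M\to TN$ is in hand, the paper first realizes the prescribed singularities on a small tubular neighborhood $U\supset C$ by hand: deform $f$ near $P$ to have the correct cusp models, use Hirsch's relative immersion theorem to make $f|_{C\setminus P}$ an immersion with $\psi|_{TC}=df|_{TC}$, and then extend across $U$ so that $f|_U$ is already a genuine $(C,P)$-immersion and $\psi|_U$ is its twisted differential. At that point one applies Eliashberg's \emph{original} fold $h$-principle in its relative form, with $V=\bar U$ as the fixed region; no cusp version is needed because all cusps are already correctly realized inside $V$. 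Your proposal misses this reduction, which is the main idea of the sufficiency proof.

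For necessity, your Thom--Porteous argument for 1.1 and 1.2 is fine and essentially equivalent to the paper's. Your Riemann--Hurwitz sketch for 1.3 is plausible but vague (``tracking cusp contributions'' is doing a lot of work); the paper instead reads 1.3 off directly from $\ee(f^*TN)=\pm\deg f\cdot\chi(N)$ and the Euler class computation for $T^{C,P}M$, which is cleaner and works uniformly in the nonorientable case.
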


Here $\deg f$ is the degree of a map of possibly non\-orientable manifolds.
We can define it when $f^*w_1(N)=w_1(M)$, see Definition~\ref{def:degree} in \S\ref{s:twisted-euler-class}.
So if $[C]=0$, then $\deg f$ is well defined as soon as condition 1.1 holds.

Note that if we swap $M_+$ and $M_-$, then the values of $n_+$ and $n_-$ will be interchanged.
Therefore $\big|\chi(M_+)-\chi(M_-)-n_+ +n_-\big|$ does not depend on
which of the two submanifolds that are bounded by $C$ is $M_+$ and which is $M_-$.

\begin{remark}
Y.~Eliashberg proved Theorem~\ref{th:homotopic} in the case when $N$ is orientable, see \cite[Th.~4.8,~4.9]{eliashberg}.
In our proof we use Eliashberg's $h$-principle for maps with fold type singularities, see~\S\ref{s:eliashbergs-theorem}.
But in the case when $N$ is nonorientable the computation of the obstruction  turns out to be quite a bit more tricky.
In particular, in order to handle this case we introduce the notion of twisted tangent bundle, see~\S\ref{s:twisted-tangent-bundle}.
\end{remark}

As a consequence of Theorem \ref{th:homotopic} we obtain the following answer to the question above.

\begin{theorem}\label{th:existence}
Suppose we are given closed surfaces $M,N$, a nonempty closed 1-submanifold $C\subset M$
and a discrete subset $P\subset C$ with chosen directions.
Then there is a $(C,P)$-im\-mer\-sion $f:M\to N$ if and only if all of the following conditions hold:
\begin{enumerate}
\item[2.1] $[P]=[C]^2$;

\item[2.2] if $w_1(N)=0$, then $[C]=w_1(M)$;

\item[2.3] if $[C]=0$, then there exists $d\in\Z$ such that
    $\big|\chi(M_+)-\chi(M_-)-n_+ +n_-\big| = |d\cdot\chi(N)|$\ \ and\ \ $\chi(M)\le |d|\cdot\chi(N)$;\ \
     $d$ must be even if $M$ is orientable and $N$ is nonorientable;

\item[2.4] if $[C]\ne0$, $w_1(M)\ne0$ and $[C]^2\ne w_2(M)$,
    then $w_2(N)\ne0$ and $\chi(N)>\chi(M)$.
\end{enumerate}
\end{theorem}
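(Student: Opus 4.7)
The plan is to reduce Theorem~\ref{th:existence} to Theorem~\ref{th:homotopic}: a $(C,P)$-immersion exists if and only if some continuous $f\colon M\to N$ satisfies 1.1--1.3, so the task becomes to translate the homotopical criteria 1.1--1.3 into intrinsic conditions on $(M,N,C,P)$. The key observation driving both directions is that the Wu formula $w_2(X)=w_1(X)^2$ on a closed surface, combined with condition 2.1, yields
\[
[P]+w_2(M)=\bigl([C]+w_1(M)\bigr)^2\quad\text{and}\quad f^*w_2(N)=\bigl(f^*w_1(N)\bigr)^2,
\]
so condition 1.2 is forced by 1.1 whenever 2.1 holds. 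Thus only condition 1.1 (and 1.3 in case $[C]=0$) has to be arranged.

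For the necessity direction I would start with a $(C,P)$-immersion $f$ and read off 2.1 from the Wu identity above applied to 1.1 and 1.2, and 2.2 directly from 1.1. For 2.3, I would set $d=\deg f$: the equality is exactly 1.3, the inequality $\chi(M)\le |d|\chi(N)$ is the Kneser-type bound on degrees of maps between closed surfaces, and the parity condition reflects the lifting of $f$ through the orientation double cover of $N$ when $M$ is orientable and $N$ is not, forced by $f^*w_1(N)=w_1(M)=0$. For 2.4, the same cohomological identities imply $w_1(N),w_2(N)\ne 0$, and the strict inequality $\chi(N)>\chi(M)$ is extracted from an Euler-characteristic count for a generic map with a non-empty fold set.

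For the sufficiency direction, assuming 2.1--2.4, I would construct a continuous $f\colon M\to N$ verifying 1.1 and (when $[C]=0$) 1.3, then invoke Theorem~\ref{th:homotopic}. When $[C]=0$, condition 2.3 supplies an integer $d$, and a map of degree $d$ exists by the realizability side of the Kneser theorem for surface maps (the even-degree proviso is handled by first producing a map into the orientation double cover of $N$ and then composing with the covering projection). When $[C]\ne 0$, condition 1.3 is vacuous and the only task is to arrange $f^*w_1(N)=[C]+w_1(M)$: if $w_1(N)=0$ then 2.2 forces this class to be zero and any map works, while if $w_1(N)\ne 0$ the class is realized by an explicit handle-by-handle construction in which a representative loop of $[C]+w_1(M)$ in $M$ is sent to a representative of $w_1(N)$ in $N$.

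I expect the main obstacle to lie in this last case, governed by 2.4: when $[C]\ne 0$, $w_1(M)\ne 0$ and $[C]^2\ne w_2(M)$, the prescribed pullback has nontrivial square, so both $w_1(N)\ne 0$ and $w_2(N)\ne 0$ must hold, and the Euler-characteristic room $\chi(N)>\chi(M)$ is precisely what permits the construction to be carried out on all of $M$ rather than merely on a handlebody piece of it; verifying that 2.4 captures this obstruction exactly --- both necessary and sufficient for this particular arrangement of invariants --- is the delicate core of the argument.
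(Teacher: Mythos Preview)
Your overall strategy matches the paper's exactly: reduce both directions to Theorem~\ref{th:homotopic}, use the Wu relation $w_2=w_1^2$ so that 1.2 becomes automatic from 1.1 once 2.1 is assumed, and then treat the remaining conditions by constructing or analyzing a continuous $f$ case by case. Your handling of 2.1, 2.2, 2.3 and the general shape of the sufficiency argument are all in line with the paper.

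There is one genuine gap, namely the necessity of the strict inequality $\chi(N)>\chi(M)$ in 2.4. You propose to extract it ``from an Euler-characteristic count for a generic map with a non-empty fold set,'' but under the hypotheses of 2.4 we have $[C]\ne 0$, hence $f^*w_1(N)\ne w_1(M)$, so $\deg f$ is not even defined and no Euler-characteristic identity of the 1.3 type is available; the fold set by itself carries no numerical information here beyond its $\Z_2$-class. The paper's argument is instead purely cohomological. Since $f^*w_2(N)=[C]^2+w_2(M)\ne 0$, one chooses a basis $b_1,\dots,b_g$ of $H^1(N;\Z_2)$ with $b_i^2=w_2(N)$ and $b_i\smile b_j=0$ for $i\ne j$ (the core circles of a crosscap decomposition of $N$, Proposition~\ref{pr:basis-curves}). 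The pullbacks $a_i=f^*b_i$ then satisfy $a_i^2\ne 0$ and $a_i\smile a_j=0$, which forces them to be linearly independent; moreover $\sum a_i=f^*w_1(N)=w_1(M)+[C]\ne w_1(M)$. Realizing the $a_i$ by disjoint one-sided simple closed curves (Proposition~\ref{pr:independent-curves}) shows that $M$ carries at least $g+1$ crosscaps, whence $\chi(M)\le 1-g<2-g=\chi(N)$. Some argument of this flavor is needed; without it the necessity of 2.4 is not established. The matching sufficiency step in this regime (your ``handle-by-handle construction'') likewise hinges on a crosscap-decomposition argument (Proposition~\ref{pr:curve-through-crosscaps} in the paper), which is where the inequality $\chi(N)>\chi(M)$ is actually consumed.
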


This result can be viewed as a first step in the study of the relationship between a generic map, its set of critical points
and the set of critical values (called {\it the apparent contour}).
Our theorem generalizes the results of \cite{hmjrf}, \cite{mj}, \cite{mjrf} and \cite{m.yamamoto} on the combinatorial description of the critical locus of a generic map.
However, 
finding restrictions on possible
apparent contours
is much more complicated, see~\cite[\S6]{yamamoto}.
The next step would be to find an obstruction for a continuous map to be homotopic to a generic one with a given set of critical values.

\subsubsection*{Structure of the paper}

In \S\ref{s:eliashbergs-theorem} we recall the results from \cite{eliashberg} and \cite{hirsch}
which will be used in the proof of Theorem~\ref{th:homotopic}.
In \S\ref{s:local-systems} we discuss  local systems and
prove some technical results on cohomology of manifolds with local coefficients.
In \S\ref{s:twisted-euler-class} we define the Euler class with local coefficients.
We use it to classify rank $2$ vector bundles up to isomorphism in \S\ref{s:2-bundles-classification}.

In \S\ref{s:twisted-tangent-bundle} we describe the vector bundle $f^*(TN)$
for a $(C,P)$-immersion $f:M\to N$ and compute its characteristic classes in terms of $C$ and $P$.
Using this description we prove Theorem~\ref{th:homotopic} in \S\ref{s:proof-homotopic}.
In \S\ref{s:curves} we prove a few preliminary statements about simple closed curves and maps of surfaces.
Finally, in \S\ref{s:proof-existence} we deduce Theorem~\ref{th:existence} from Theorem~\ref{th:homotopic}.

\subsubsection*{Acknowledgments}
I wish to thank Alexey Gorinov for helpful suggestions and conversations.

\section{Preliminaries on manifolds and vector bundles}

In this section we state some general theorems about $n$-manifolds.
Note that we use these results only for $n=2$.

\subsection{Eliashberg's and Hirsch's Theorems}\label{s:eliashbergs-theorem}

If $p:E\to X$ and $p':E'\to Y$ are vector bundles over topological spaces,
$f:X\to Y$ is a continuous map and $\phi:E\to E'$ is a fiberwise map such that $p'\circ\phi=f\circ p$,
then we say that {\it $\phi$ covers $f$}.

In our proof of Theorem~\ref{th:homotopic} we will use 
\cite[
Th.\,2.2]{eliashberg}.
Let us briefly recall the notation.
We set $M,N$ to be $n$-manifolds, $n\ge2$, and
let $C\subset M$ be a codimension 1 closed submanifold. 

A {\it $C$-immersion $M\to N$} is a map with fold type singularities along $C$ and no other critical points.
A {\it $C$-monomorphism $M\to N$} is a fiberwise morphism of tangent bundles $\phi:TM\to TN$ such that
the restrictions $\phi|_{T(M\setminus C)}$ and $\phi|_{TC}$ are fiberwise injective
and there exist a tubular neighborhood $U\supset C$ and a nonidentity involution $h:U\to U$ preserving $C$
such that over $U$ we have $\phi\circ dh=\phi$.
(Instead of $C$-monomorphisms one can consider fiberwise isomorphisms $T^CM\to TN$,
see \S\ref{s:tcm} for the definition.)

Let $V\subset M$ be a closed submanifold of dimension $n$ with boundary
such that every component of $M\setminus V$ intersects $C$ and let $f_0:V\to N$ be a smooth map.
Denote by $\mathrm{Imm}_{C,V}(M,N)$ the space of maps $f:M\to N$ such that
$f|_V=f_0$ and $f|_{M\setminus V}$ is a $C$-immersion.
Let $\mathrm{Mon}_{C,V}(M,N)$ be the space of fiberwise maps $\phi:TM\to TN$ such that
$\phi|_{TV}=df_0$ and $\phi|_{T(M\setminus V)}$ is a $C$-monomorphism.

\begin{theorem}[Eliashberg]\label{th:eliash}
The map
$\pi_0\big(\mathrm{Imm}_{C,V}(M,N)\big) \to \pi_0\big(\mathrm{Mon}_{C,V}(M,N)\big)$
induced by taking the differential is surjective.
\hfill$\square$
%
\end{theorem}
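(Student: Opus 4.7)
The plan is to follow the standard strategy for $h$-principles for maps with prescribed singularities: realize the formal fold data inside a tubular neighborhood of $C$ directly via a normal form, then apply the Smale-Hirsch immersion theorem on the complement with boundary conditions. The key observation is that the $C$-monomorphism condition $\phi\circ dh=\phi$ is precisely the infinitesimal invariant of a fold: in local coordinates adapted to the involution $h(c,t)=(c,-t)$, it forces $\phi$ to kill $\partial_t$ along $C$, to restrict to a genuine bundle monomorphism $\phi|_{TC}:TC\to TN$ covering some map $g_C:C\to N$, and to carry a well-defined quadratic leading term encoded by a section $\sigma$ of $\nu^{\otimes 2}\otimes \phi^*TN/dg_C(TC)$ along $C$, where $\nu$ is the normal line bundle of $C$ in $M$.

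First I would equivariantly identify a tubular neighborhood $U$ of $C$ with the total space of $\nu$ so that $h$ acts by $-1$ on fibers, and then construct an explicit fold map $f_U:U'\to N$ on a smaller neighborhood $U'\subset U$: in coordinates adapted to $\sigma$ and to a local framing of $\nu$, the map has the standard fold normal form $(c,t)\mapsto (g_C(c),t^2)$. The differential $df_U$ is homotopic to $\phi|_{U'}$ through $C$-monomorphisms rel $C$, because both are determined by the same infinitesimal data described above. Next I would apply the relative Smale-Hirsch theorem to the complement $W=\overline{M\setminus U'}$: the restriction $\phi|_W$ is an honest bundle monomorphism $TW\to TN$ extending $df_0$ on $V$ and $df_U$ on $\partial U'$; since every component of $M\setminus V$ meets $C$ and hence $\partial U'$, $W$ has no closed components, so Smale-Hirsch produces an immersion $g:W\to N$ with $dg$ homotopic to $\phi|_W$ rel the boundary. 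Gluing $f_U$ and $g$ along $\partial U'$ produces the required element of $\mathrm{Imm}_{C,V}(M,N)$.

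The main obstacle I expect is ensuring that the homotopy connecting $\phi$ to the differential of the glued map stays inside $\mathrm{Mon}_{C,V}(M,N)$ at every stage, i.e. that the $C$-monomorphism condition is preserved throughout. This requires coordinating the Smale-Hirsch deformation on the two sides of $C$ equivariantly with respect to $h$, which in turn rests on a relative version of Smale-Hirsch whose boundary condition is itself an equivariant family. Establishing this equivariant relative form is the technical heart of the argument; once it is available, the rest of the proof is a matter of patching standard constructions along $\partial U'$ and interpolating between the fold normal form on $U'$ and the immersion produced on $W$.
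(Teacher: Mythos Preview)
The paper does not prove this theorem: it is quoted from Eliashberg's original article and closed with $\square$, serving as a black-box input to the rest of the paper. So there is no argument in the paper to compare your proposal against.

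That said, your sketch is a reasonable outline of the standard strategy and is in the spirit of Eliashberg's original approach. Two points would need tightening before it becomes a proof. First, the normal form $(c,t)\mapsto(g_C(c),t^2)$ presupposes that $g_C=\bar\phi|_C$ is an immersion, but a $C$-monomorphism only gives you that $\phi|_{TC}$ is fiberwise injective over the continuous map $\bar\phi|_C$; you must first invoke Hirsch's theorem (legitimate since $\dim C<\dim N$) to deform $\bar\phi|_C$ to an actual immersion and carry $\phi$ along. Second, on the complement $W$ you are in the \emph{equi-dimensional} situation $\dim W=\dim N$, which is not the version of Hirsch recorded in the paper; you need the Phillips--Hirsch immersion theorem for open manifolds, and for that the domain to which you apply it must have no closed components. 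The hypothesis that every component of $M\setminus V$ meets $C$ is precisely what guarantees this once you work on $\overline{M\setminus(U'\cup V)}$ relative to both $\partial U'$ and $\partial V$. With these two adjustments the plan is sound, and the equivariant patching you flag as the main obstacle is indeed where most of the work lies.
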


In other words, every $\phi\in\mathrm{Mon}_{C,V}(M,N)$
can be deformed to the differential of a $C$-immersion
in the class of $C$-monomorphisms fixed over $V$.
In particular, if $\mathrm{Mon}_{C,V}(M,N)$ is nonempty, then $\mathrm{Imm}_{C,V}(M,N)$ is nonempty.
So instead of constructing a $(C,P)$-immersion
it would suffice to construct just a morphism of tangent bundles that would be a fiberwise isomorphism outside $C$,
and such that its restriction to some neighborhood of $C$ would be the differential of a $(C,P)$-immersion.

\vspace{.5em}

We will also use 
\cite[p.\,265, Th.\,5.7]{hirsch}.
This statement is known as the {\it relative $h$-principle for immersions in positive codimension}.

\begin{theorem}[Hirsch]\label{th:hirsch}
Let $X,Y$ be manifolds such that $\dim X<\dim Y$.
Suppose we are given a smooth map $f:X\to Y$ and a fiberwise injective homomorphism $\phi:TX\to TY$ that covers $f$ and
such that for some $CW$-subcomplex $Z\subset X$ we have $\phi|_Z=df|_Z$.
Then there exists an immersion $f':X\to Y$ such that
$\phi$ is homotopic to $df'$ in the class of fiberwise injective homomorphisms fixed over $Z$.
\hfill$\square$
\end{theorem}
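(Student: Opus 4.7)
The plan is to prove Hirsch's theorem by induction on a handle decomposition of $X$ relative to $Z$. As a preliminary reduction, a standard collaring argument lets me strengthen the hypothesis so that $\phi$ equals $df$ not merely on $Z$ but on an open neighborhood $U \supset Z$, giving a genuine immersion on an open set from the outset. I would then choose a handle decomposition of $X$ compatible with $Z$ in which $U$ absorbs a neighborhood of the $0$-handles, and proceed to extend the immersion one handle at a time, maintaining throughout the induction a homotopy from $\phi$ to the differential of the current immersion that is kept fixed over $U$.

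The inductive step amounts to attaching a single $k$-handle $D^k \times D^{n-k}$, with $n=\dim X$ and $k \le n < \dim Y$. Shrinking to a tubular neighborhood of the attaching region and using a local trivialization of $TY$, this reduces to the following local problem: given an immersion on the attaching part of the handle, extend it to an immersion of the entire handle whose differential is homotopic rel boundary to the prescribed bundle monomorphism $T(D^k \times D^{n-k}) \to TY$.

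The core of the argument is the high connectivity of the Stiefel manifold $V_n(\R^{\dim Y})$ of $n$-frames in $\R^{\dim Y}$: it is $(\dim Y - n - 1)$-connected, and in particular path-connected under our hypothesis $\dim Y > \dim X$. This connectivity allows one to deform a bundle monomorphism into the differential of an actual immersion by progressively rotating tangent $n$-planes along paths in the Stiefel manifold; the rigorous implementation is either Smale's jet-space technique or Gromov's convex integration applied to the open and ample differential relation that defines immersions. The local extension of an immersion across a single handle is the main obstacle; once it is available, the collaring reduction and the inductive bookkeeping amount to routine manipulations of homotopies of fiberwise injective bundle maps.
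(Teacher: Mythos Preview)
The paper does not prove Theorem~\ref{th:hirsch}: it is quoted as a black box from Hirsch's original article \cite[p.\,265, Th.\,5.7]{hirsch}, as the terminal $\square$ indicates, and is invoked only as a tool in the proof of Theorem~\ref{th:homotopic}. There is therefore no proof in the paper to compare your proposal against.

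That said, your outline is a faithful sketch of the classical argument. Hirsch's own proof proceeds by induction over the skeleta (equivalently, handles) of $X$ relative to the subcomplex where $\phi$ already agrees with $df$, and the key ingredient is exactly the connectivity of the Stiefel manifold $V_n(\R^m)$ with $m=\dim Y>n=\dim X$, which guarantees that the obstruction to extending a section of the associated Stiefel bundle over each cell vanishes. Two small remarks: first, the appeal to Gromov's convex integration is anachronistic relative to Hirsch's 1959 paper and is not needed here, since the obstruction-theoretic argument already suffices; second, your ``local trivialization of $TY$'' should be understood as a trivialization of the pullback $f^*TY$ over the (contractible) handle, not of $TY$ itself. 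With those caveats, your plan is the standard one.
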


\subsection{Orientation local systems of vector bundles}\label{s:local-systems}


We denote the orientation local system of a vector bundle $E$ 
by $\ZZ_E$,
and let $\ZZ_M$ be the orientation sheaf of a manifold $M$.
Note that we mostly use local systems as coefficients for {\it singular} cohomology.
So we think of a local system as a module over the path groupoid of $M$ (see e.\,g.\ \cite[pp.179--180]{spanier}).

In \S\S\ref{s:local-systems} and \ref{s:twisted-euler-class} we set $M$ and $N$ to be closed connected $n$-manifolds.

\begin{proposition}\label{pr:n-th-cohomology}
Let $E\to M$ be a 
vector bundle.
If $w_1(E)=w_1(M)$, then we have $H^n(M;\ZZ_E)\simeq\Z$. 
In particular, $H^n(M;\ZZ_M)\simeq\Z$. \
If $w_1(E)\ne w_1(M)$, then $H^n(M;\ZZ_E)\simeq\Z_2$.
\end{proposition}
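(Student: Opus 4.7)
The plan is to reduce the computation to $H_0$ via Poincaré duality with local coefficients and then identify $H_0$ with the coinvariants of the resulting rank-1 local system, whose monodromy is determined by $w_1(E)+w_1(M)$.

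First, I would recall that for a closed connected $n$-manifold $M$ and any local system $\mathcal{L}$ of abelian groups on $M$, Poincaré duality gives a canonical isomorphism
\[
H^n(M;\mathcal{L})\;\simeq\;H_0\bigl(M;\mathcal{L}\otimes\ZZ_M\bigr).
\]
Applied to $\mathcal{L}=\ZZ_E$, this yields $H^n(M;\ZZ_E)\simeq H_0(M;\ZZ_E\otimes\ZZ_M)$. Next, I would observe that $\ZZ_E\otimes\ZZ_M$ is again a rank-1 local system over $\Z$; since the monodromy of the orientation local system of a vector bundle $F$ is given by the homomorphism $\pi_1(M)\to\{\pm1\}$ corresponding to $w_1(F)$, the tensor product has monodromy corresponding to $w_1(E)+w_1(M)\in H^1(M;\Z_2)$.

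Then I would compute $H_0$ as coinvariants. For any rank-1 local system $\mathcal{L}$ on a connected space given by a homomorphism $\varepsilon:\pi_1(M)\to\{\pm1\}$ acting on $\Z$, one has $H_0(M;\mathcal{L})=\Z\big/\langle x-\varepsilon(\gamma)x\mid\gamma\in\pi_1(M)\rangle$, which is $\Z$ if $\varepsilon$ is trivial and $\Z/2$ if $\varepsilon$ is nontrivial. In the case $w_1(E)=w_1(M)$ the combined class $w_1(E)+w_1(M)$ vanishes, so the local system $\ZZ_E\otimes\ZZ_M$ is trivial and $H_0\simeq\Z$; in the case $w_1(E)\ne w_1(M)$ the class is nontrivial, so some loop acts by $-1$ and $H_0\simeq\Z_2$. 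The statement $H^n(M;\ZZ_M)\simeq\Z$ is the special case $E=TM$.

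The only nontrivial ingredient is Poincaré duality with local coefficients for possibly nonorientable $M$; I would either cite the standard reference (for instance Spanier or Hatcher's supplementary notes) or sketch it using the fundamental class in $H_n(M;\ZZ_M)$ and cap product. The case analysis itself is then a routine computation of coinvariants, so the main obstacle is really just being careful that tensoring orientation local systems corresponds to adding first Stiefel--Whitney classes, a fact that follows directly from the description of $\ZZ_F$ via the determinant line bundle of $F$.
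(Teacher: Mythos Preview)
Your proposal is correct and follows essentially the same route as the paper: apply Poincar\'e duality with local coefficients to reduce $H^n(M;\ZZ_E)$ to $H_0(M;\ZZ_E\otimes\ZZ_M)$, observe that the monodromy of $\ZZ_E\otimes\ZZ_M$ is governed by $w_1(E)+w_1(M)$, and compute the resulting coinvariants. The paper also cites Spanier for the duality statement and carries out the same case analysis, only phrasing the $\Z_2$ case in terms of the boundary of an explicit $1$-chain rather than abstract coinvariants.
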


\begin{proof}
The local system $\ZZ_E$ is uniquely determined by the class $w_1(E)$.
Namely, a loop $\gamma$ acts on the fiber of $\ZZ_E$ as 
$(-1)^{ w_1(E)\frown\gamma}$.
Note that in particular $\ZZ_E\simeq\ZZ_M$ if and only if $w_1(E)=w_1(M)$.

Next we use the Poincar\'e duality:
for every local system $L$ on $M$ we have
$$H_k(M;\ZZ_M\otimes L)\simeq H^{n-k}(M;L)$$
(see e.\,g.\ \cite[Th.\,10.2]{spanier}).
We set $L=\ZZ_E$ and $k=0$.

If $w_1(E)=w_1(M)$, then the local systems $\ZZ_M$ and $\ZZ_E$ are isomorphic.
So their tensor product is constant and we have $H^n(M;\ZZ_E)\simeq H_0(M;\Z)\simeq\Z$.

If $w_1(E)\ne w_1(M)$, then there is a loop which either changes the orientation of $M$ and preserves the orientation of~$E$, or vice versa.
If we consider such a loop as an element of the singular chain group $C_1(M;\ZZ_M\otimes\ZZ_E)$,
then its boundary is equal to twice its basepoint.
On the other hand, the boundary of every $1$-chain reduced modulo $2$ is zero.
Therefore $H^n(M;\ZZ_E)\simeq H_0(M;\ZZ_M\otimes\ZZ_E)\simeq\Z_2$.
\end{proof}

\begin{proposition}\label{pr:preserving-orientation}
Suppose we are given a map $f:M\to N$ such that $w_1(M)=f^*w_1(N)$.
Then $f$ induces a homomorphism $f^*:H^n(N;\ZZ_N)\to H^n(M;\ZZ_M)$ well defined up to a sign.
\end{proposition}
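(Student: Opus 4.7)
The plan is to identify the pulled-back local system $f^*\ZZ_N$ on $M$ explicitly, recognize it as isomorphic to $\ZZ_M$, and then observe that the only ambiguity in that identification is a global sign.

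First I would unpack the monodromy of $f^*\ZZ_N$. As stated in the proof of Proposition \ref{pr:n-th-cohomology}, a loop $\gamma$ in $N$ acts on the fiber of $\ZZ_N$ by $(-1)^{w_1(N)\frown\gamma}$. Consequently, for a loop $\delta$ in $M$, the pulled-back local system $f^*\ZZ_N$ assigns to $\delta$ the action of $f\circ\delta$ on $\ZZ_N$, which equals
\[
(-1)^{w_1(N)\frown f_*\delta}\;=\;(-1)^{f^*w_1(N)\frown\delta}\;=\;(-1)^{w_1(M)\frown\delta}
\]
by the hypothesis $w_1(M)=f^*w_1(N)$. But this is precisely the monodromy of $\ZZ_M$ on $\delta$. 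Since both are rank-one local systems of $\Z$-modules on a connected space with the same holonomy representation, they are isomorphic.

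Next I would pin down the ambiguity. Any isomorphism $\psi:f^*\ZZ_N\xrightarrow{\sim}\ZZ_M$ is determined, via parallel transport from a basepoint, by its value on a single fiber, which is an automorphism of $\Z$; the only such automorphisms are $\pm\Id$. Hence $\psi$ is unique up to an overall sign. Composing the canonical pullback $f^*\colon H^n(N;\ZZ_N)\to H^n(M;f^*\ZZ_N)$ (which exists functorially for continuous maps with coefficients in local systems, see e.g.\ \cite[Ch.~I]{spanier}) with the induced isomorphism $\psi_*\colon H^n(M;f^*\ZZ_N)\to H^n(M;\ZZ_M)$ gives the desired map $f^*\colon H^n(N;\ZZ_N)\to H^n(M;\ZZ_M)$, well defined up to multiplication by $-1$.

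There is no real obstacle here; the only thing to be careful about is that one really does need $w_1(M)=f^*w_1(N)$ to even produce \emph{any} isomorphism $f^*\ZZ_N\simeq\ZZ_M$, so without this hypothesis the construction fails at the first step. The sign ambiguity, which one might hope to remove, cannot be eliminated in general precisely because neither $M$ nor $N$ is assumed oriented, so there is no preferred choice of generator of $H^n(N;\ZZ_N)\simeq\Z$ that could be used to rigidify $\psi$.
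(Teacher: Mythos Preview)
Your proof is correct and follows essentially the same approach as the paper: both arguments use the functorial pullback $H^n(N;\ZZ_N)\to H^n(M;f^*\ZZ_N)$ and then compose with an isomorphism $f^*\ZZ_N\simeq\ZZ_M$, noting that the latter is only determined up to a sign. You simply spell out in more detail why $f^*\ZZ_N\simeq\ZZ_M$ holds (via the monodromy computation) and why the isomorphism is unique up to $\pm\Id$, whereas the paper states these facts more briefly.
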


\begin{proof}
Note that for all topological spaces $X,Y$ and every local system $L$ over $Y$,
any map $g:X\to Y$ induces a homomorphism $g^*:H^*(Y;L)\to H^*(X;g^*L)$.
Also note that for any vector bundle $E\to Y$ we have $g^*(\ZZ_E)=\ZZ_{g^*(E)}$.


Since $w_1(M)=f^*w_1(N)$, the local system $\ZZ_M$ is isomorphic to $\ZZ_{f^*(TN)}=f^*(\ZZ_N)$.
Therefore we can define a homomorphism $H^n(N;\ZZ_N)\to H^n(M;\ZZ_M)$ induced by $f$.
This homomorphism is well defined up to a sign, which depends on the choice of the isomorphism $\ZZ_M\simeq f^*(\ZZ_N)$.
\end{proof}

\begin{definition}\label{def:degree}
Let $f:M\to N$ be a map such that $w_1(M)=f^*w_1(N)$.
Then by Proposition~\ref{pr:preserving-orientation}
there is a homomorphism $f^*:H^n(N,\ZZ_N)\to H^n(M,\ZZ_M)$.
By Proposition~\ref{pr:n-th-cohomology} we have $H^n(M,\ZZ_M)\simeq H^n(N,\ZZ_N)\simeq\Z$.
So $f^*:\Z\to\Z$ is multiplication by some integer~$d$
(which is well defined up to a sign).
Then {\it the degree of the map $f$} is the absolute value $|d|$.
We denote it by $\deg f$.
\end{definition}

\subsection{Euler class with local coefficients}\label{s:twisted-euler-class}

One can define the Euler class as the first obstruction to the existence of a nonzero section.
Such an approach uses the obstruction theory with local coefficients over $CW$-complexes.
Let us sketch this definition
(see e.\,g.\ \cite[\S12]{milnor-stasheff} or \cite[chapter\,VI,~\S5]{whitehead} for details).

Let $E\to X$ be a rank $m$ vector bundle over a $CW$-complex $X$.
There is a section of 
the corresponding sphere
bundle $S(E)$ over $\sk^{m-2}(X)$.
Such a section is unique up to homotopy.
Every section of $S(E)$ over $\sk^{m-2}(X)$ can be extended to $\sk^{m-1}(X)$.
Then 
a trivialization of $E$ over an $m$-cell $B\subset X$
gives us a map $o_B:\partial B\to S^{m-1}$, which may be non-contractible.
If we choose orientations of $B$ and of the fiber of $S(E)$ over an interior point of $B$,
then the integer $\deg(o_B)$ is well defined.
The correspondence $B\mapsto\deg(o_B)$ is a cocycle with coefficients in $\ZZ_E$.
If we change the section on $(m-1)$-cells,
the cohomology class of this cocycle remains the same.

However to compute the Euler class for a given vector bundle over a manifold of the same dimension
the following equivalent approach turns out to be more convenient.

\begin{definition}\label{def:euler-class}
Suppose we are given a rank $n$ vector bundle $E$ over an $n$-manifold $M$.
Take a section $s$ with a discrete set of zeroes.
For each zero $x_i$ take its neighborhood $D_i\subset M$ homeomorphic to a disk.
Choose an orientation of $D_i$ and an orientation of the fiber $E|_{x_i}$.
If we trivialize the restriction $E|_{D_i}$,
then the section $s$ over $\partial D_i$
determines a map
$$\partial D_i=S^{n-1}\to S^{n-1}=S(E|_{x_i}),$$
where $S(E|_{x_i})$ is the unit sphere in the fiber.
Denote the degree of this map by $a_i$.

The number $a_i$ is called {\it the index of the zero point $x_i$}.
Note that we do not assume that $x_i$ is non-degenerate, so $a_i$ can be an arbitrary integer,
and the sign of $a_i$ depends on the choices of the orientations of $D_i$ and $E|_{x_i}$.
The sum $\sum a_ix_i$  
is a singular cycle in
$H_0(M;\ZZ_M\otimes\ZZ_E)$,
and its Poincar\'e dual is called {\it the Euler class} $\ee(E)\in H^n(M;\ZZ_E)$.
\end{definition}

If we assume that $x_i\notin\sk^{n-1}(M)$,
then our definition and the obstruction-theoretical one give the same cocycle.
Therefore $\ee(E)$ does not depend on the choice of the section $s$.


Note that the Euler class is functorial: $\ee(f^*(E))=f^*(\ee(E))$ for any map $f:M'\to M$.
We will speak of $\ee(E)$ as an integer
defined up to a sign, respectively an integer modulo $2$,
if $w_1(E)=w_1(M)$, respectively if $w_1(E)\ne w_1(M)$ (see Proposition~\ref{pr:n-th-cohomology}).

\begin{proposition}\label{pr:ee-tm-equals-chi-m}
For every closed $n$-manifold $M$ we have $\ee(TM)=\pm\chi(M)$.
\end{proposition}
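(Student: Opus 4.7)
The plan is to compute $\ee(TM)$ via Definition~\ref{def:euler-class} using an explicit vector field---the gradient of a Morse function---and identify the index sum with the classical Morse-theoretic expression for $\chi(M)$.

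First I note that for $E=TM$ the orientation local systems $\ZZ_{TM}$ and $\ZZ_M$ coincide, since both are determined by $w_1(TM)=w_1(M)$. Consequently the tensor square $\ZZ_M\otimes\ZZ_{TM}=\ZZ_M^{\otimes 2}$ is canonically the constant sheaf $\Z$ up to a global choice of sign, and by Proposition~\ref{pr:n-th-cohomology} we have $H^n(M;\ZZ_{TM})\simeq\Z$. In particular $\ee(TM)$ is already only defined up to an overall sign, which matches the $\pm$ in the statement.

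Next, fix a Morse function $f:M\to\R$ and take the section $s=\nabla f$ (using the chosen Riemannian metric). Its zeros are the critical points of $f$, which are isolated. Around a critical point $x_i$ of Morse index $k_i$, Morse coordinates on a disk $D_i\subset M$ orient $D_i$ and, via the differential, orient the fiber $T_{x_i}M=E|_{x_i}$; relative to the trivialization of $\ZZ_M^{\otimes 2}$ this is the compatible pair of orientations. In Morse normal form the gradient has the shape $(-y_1,\dots,-y_{k_i},y_{k_i+1},\dots,y_n)$, so the map $\partial D_i\to S^{n-1}$ appearing in Definition~\ref{def:euler-class} has degree $(-1)^{k_i}$, giving $a_i=(-1)^{k_i}$. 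Summing over critical points, $\sum_i a_i=\sum_k(-1)^k c_k=\chi(M)$ by standard Morse theory (which applies equally well when $M$ is nonorientable, since the right-hand side is an Euler characteristic). This yields $\ee(TM)=\pm\chi(M)$.

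The only genuinely non-routine point is checking that the $a_i$ patch together into a well-defined cycle with coefficients in $\ZZ_M\otimes\ZZ_{TM}$ when $M$ is nonorientable, i.e. that the local contributions do not depend on how the orientations of $D_i$ and $E|_{x_i}$ are propagated around orientation-reversing loops. This is automatic from the coincidence $\ZZ_{TM}=\ZZ_M$: any loop that flips the local orientation of $D_i$ simultaneously flips the chosen orientation of $E|_{x_i}$, and the local degree $(-1)^{k_i}$ is invariant under simultaneous orientation reversal on source and target. This is the one step where the non-orientable case is, in principle, different from the orientable one, and it is handled cleanly by the local-system formalism of \S\ref{s:local-systems}.
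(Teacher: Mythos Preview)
Your proof is correct and follows essentially the same strategy as the paper: observe that $\ZZ_M\otimes\ZZ_{TM}$ is canonically constant so that local indices assemble into a well-defined integer, then exhibit an explicit vector field whose index sum is $\chi(M)$. The only cosmetic difference is that the paper uses the standard vector field associated to a triangulation (one zero of index $(-1)^k$ at the center of each $k$-simplex) rather than a Morse gradient.
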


\begin{proof}
A local orientation of the manifold and a local orientation of the fibers of its tangent bundle naturally determine one another.
In other words, for $E=TM$ we have a {\it canonical} isomorphism $\ZZ_M\otimes\ZZ_E\simeq\Z$.
Therefore the sum of the indices of the zeroes of any section of $TM$ is a well defined integer. 
We call this sum {\it the total index of the section}.

So we can 
use the same strategy as in orientable case (see e.\,g.\ \cite[chapter~5, \S4.2]{prasolov}):
triangulate $M$ and consider a vector field which has a single zero of index $(-1)^k$ at the center of every $k$-simplex.
Then the 
total index of this vector field equals $\chi(M)$.

Note that the total index is an invariant of the vector bundle $TM$,
so for any other vector field with isolated zeroes the total index still equals $\chi(M)$.
\end{proof}

\begin{proposition}\label{pr:ee-equals-w-mod2}
The reduction $H^n(M;\ZZ_E)\to H^n(M;\Z_2)$ maps $\ee(E)$ to $w_n(E)$.
\end{proposition}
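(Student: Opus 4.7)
The plan is to compare $\ee(E)$ and $w_n(E)$ via their common description as a count of zeros of a section of $E$. I choose a section $s$ of $E$ that is transverse to the zero section (achievable after an arbitrarily small perturbation), so that every zero $x_i$ of $s$ is nondegenerate with index $a_i=\pm 1$. By Definition~\ref{def:euler-class}, $\ee(E)$ is then Poincar\'e dual to the $0$-cycle $\sum a_i x_i \in H_0(M;\ZZ_M\otimes\ZZ_E)$.

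Next I apply the coefficient homomorphism $\ZZ_M\otimes\ZZ_E\to\Z_2$ coming from reduction mod $2$; the tensor product of two orientation local systems becomes the trivial $\Z_2$-system after tensoring with $\Z_2$, since every loop acts on an orientation sheaf by $\pm 1$, and these signs become trivial mod $2$. Under this reduction each $\pm 1$ maps to $1$, so the cycle $\sum a_i x_i$ maps to $\sum x_i \in H_0(M;\Z_2)$. Poincar\'e duality then identifies this with the mod $2$ reduction of $\ee(E)$ in $H^n(M;\Z_2)$, represented by the mod $2$ count of the zero set of $s$.

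Finally I identify $\sum [x_i] \in H^n(M;\Z_2)$ with $w_n(E)$. This is the classical description of the top Stiefel--Whitney class as the mod $2$ Poincar\'e dual of the zero set of a generic section of $E$; equivalently, in the obstruction-theoretic framework recalled just before Definition~\ref{def:euler-class}, $w_n(E)$ is represented by the mod $2$ reduction of the cellular cocycle $B\mapsto\deg(o_B)$, which by construction is the cocycle representing $\ee(E)$. The main obstacle is precisely this last identification: it is standard but not one of the defining axioms of Stiefel--Whitney classes, so it needs to be cited from \cite{milnor-stasheff} or recovered via naturality together with the vanishing of $w_n$ for a bundle that admits a nonvanishing section.
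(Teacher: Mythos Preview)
Your proof is correct and follows essentially the same approach as the paper. The paper's proof is just a terser version of yours: it cites \cite[Th.\,12.1]{milnor-stasheff} for the interpretation of $w_n(E)$ as the mod $2$ obstruction to a section of $S(E)$, which is exactly the reduction of the $\ZZ_E$-valued obstruction cocycle that the paper has already identified with $\ee(E)$ right after Definition~\ref{def:euler-class}; your zero-set/Poincar\'e duality computation simply unpacks this same identification.
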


\begin{proof}
The $n$-th Stiefel-Whitney class of $E$ can be viewed as an obstruction
to the existence of a section of the sphere bundle $S(E)$ reduced modulo $2$, see e.\,g.~\cite[Th.\,12.1]{milnor-stasheff}

Note that if $w_1(E)\ne w_1(M)$, then the reduction $H^n(M;\ZZ_E)\to H^n(M;\Z_2)$ is an isomorphism,
so in this case one can identify $\ee(E)$ with $w_n(E)$.
\end{proof}

\subsection{Classification of rank 2 vector bundles over surfaces}\label{s:2-bundles-classification}

\begin{lemma}\label{l:ee-universal}
Suppose $M$ is a closed surface.
Then every two rank $2$ vector bundles $E,E'\to M$
such that $w_1(E)=w_1(E')$ and $\ee(E)=\pm\ee(E')$, are isomorphic.
\end{lemma}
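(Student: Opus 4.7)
The plan is to build an isomorphism $\phi\colon E\to E'$ cell by cell, using obstruction theory. Since $w_1(E)=w_1(E')$, the orientation local systems $\ZZ_E$ and $\ZZ_{E'}$ are isomorphic, and I fix an identification between them. The ``$\pm$'' in the hypothesis exactly reflects the two possible choices of such an identification, so after rechoosing it if necessary, I may assume $\ee(E)=\ee(E')$ in the common group $H^2(M;\ZZ_E)$.

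Fix a smooth triangulation of $M$. Over the $1$-skeleton $\sk^1(M)$ every rank $2$ real vector bundle is classified by $w_1$ alone: on each edge the bundle is trivialisable, and on each circle in $\sk^1(M)$ the only invariant is the monodromy in $\pi_0(O(2))=\Z_2$, which is $w_1$. Hence I can pick an isomorphism $\phi_1\colon E|_{\sk^1(M)}\to E'|_{\sk^1(M)}$. I then attempt to extend $\phi_1$ across each $2$-cell $B$. Choose trivialisations of $E|_B$ and $E'|_B$, adjusted so that $\phi_1$ is orientation-preserving at some point of $\partial B$; then $\phi_1|_{\partial B}$ is a loop in $GL^+(2,\R)$, and its class in $\pi_1(GL^+(2,\R))\simeq\Z$ is the local obstruction to extending $\phi_1$ over $B$.

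A short monodromy computation shows that these integers assemble into a cocycle with values in the local system $\ZZ_E$ on $M$: transporting around a loop $\gamma\subset M$ conjugates the loop in $GL^+(2,\R)$ by the monodromies of $E$ and $E'$ along $\gamma$, and conjugation by a reflection acts as $-1$ on $\pi_1(GL^+(2,\R))$, producing the sign $(-1)^{w_1(E)(\gamma)}$. By the description of $\ee$ as an obstruction to a nowhere-zero section (Definition~\ref{def:euler-class}), this obstruction class equals $\ee(E)-\ee(E')\in H^2(M;\ZZ_E)$. Since it vanishes by assumption, I can modify $\phi_1$ on the $1$-cells by a suitable coboundary and then extend it to an isomorphism $\phi\colon E\to E'$ over all of $M$. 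The main technical obstacle is the bookkeeping in the identification of the local system of the obstruction with $\ZZ_E$ and in the verification that the obstruction cocycle represents $\ee(E)-\ee(E')$; both require careful tracking of the orientation conventions for the trivialisations of $E|_B$, $E'|_B$ and of $B$ itself.
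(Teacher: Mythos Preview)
Your approach is correct and coincides with the alternative obstruction-theoretic argument that the paper itself sketches \emph{after} its main proof (there phrased as homotoping the classifying maps $\epsilon,\epsilon'\colon M\to BO(2)$, with successive obstructions $w_1(E)-w_1(E')\in H^1(M;\Z_2)$ and $\ee(E)-\ee(E')\in H^2(M;\ZZ_E)$). Your version, building a section of the $GL(2,\R)$-bundle $\Iso(E,E')$ directly, is an equivalent formulation.

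The paper's \emph{primary} proof is different and more concrete. It chooses a section $s$ of $E$ with a single zero at a point $x_0$, uses the equality $\ee(E)=\ee(E')$ to produce a section $s'$ of $E'$ with a single zero at the same point and matching $s$ via a chosen isomorphism over a small disk $D_0\ni x_0$. Over $M\setminus\{x_0\}$ each bundle then splits as a trivial line subbundle (spanned by $s$, respectively $s'$) plus a quotient line bundle; since $w_1(E)=w_1(E')$, the quotients are isomorphic, and the resulting isomorphism over $M\setminus\{x_0\}$ glues with the one over $D_0$.

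Each method has its virtue: yours generalises transparently to higher rank and higher dimensions, while the paper's hands-on argument sidesteps precisely the local-coefficient bookkeeping you flag as the ``main technical obstacle'', making every step explicit in terms of sections and line-bundle splittings.
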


\begin{proof}
Since $w_1(E)=w_1(E')$, we have an isomorphism $\ZZ_{E}\simeq\ZZ_{E'}$.
Choose it so that the induced isomorphism $H^2(M;\ZZ_{E})\to H^2(M;\ZZ_{E'})$ takes $\ee(E)$ to $\ee(E')$.
Fix a point $x_0\in M$ and a small disk $D_0\ni x_0$.
Then take an isomorphism of the restrictions $\phi:E|_{D_0}\to E'|_{D_0}$
which is compatible with the isomorphism of the fibers of $\ZZ_{E}$ and $\ZZ_{E'}$ at $x_0$.

There is a section $s:M\to E$ with a single zero point at $x_0$.
(If $w_1(E)=w_1(M)$, then the zero point $x_0$ has index $\ee(E)$.
If $w_1(E)\ne w_1(M)$ and $H^2(M;\ZZ_E)\simeq\Z_2$,
then 
for every $k\equiv\ee(E)\mod 2$
there is a section of $E$ that has a single zero of index $k$.)

By obstruction theory, the section of $E'$ over $D_0$
obtained as the composition $\phi\circ s|_{D_0}$
can be extended to a section $s'$ on the whole of $M$ with no zeroes but $x_0$.


The bundles $E|_{M\setminus\{x_0\}}$ and $E'|_{M\setminus\{x_0\}}$ have trivial line subbundles spanned by $s$ and $s'$.
Denote them by $L$, respectively $L'$.
They are both trivialized by $s$ and $s'$, so there is an isomorphism $L\to L'$ compatible with $\phi$ on $D_0\setminus\{x_0\}$.
Similarly, since $w_1(E)=w_1(E')$, there is an isomorphism of the quotient bundles
$(E|_{M\setminus\{x_0\}})/L\to (E'|_{M\setminus\{x_0\}})/L'$,
and this isomorphism can be chosen to be compatible with $\phi$ on $D_0\setminus\{x_0\}$.
These two isomorphisms together with $\phi$ define an isomorphism $E\to E'$ over the whole $M$.
\end{proof}

Let us sketch another proof of Lemma~\ref{l:ee-universal} which can be 
generalized to 
higher dimensions.

The vector bundles $E$ and $E'$ are isomorphic if and only if
the corresponding maps $\epsilon,\epsilon':M\to BO(2)$ 
 are homotopic.
 Suppose $M$ has been equipped with a $CW$-structure.
The first obstruction to a homotopy between $\epsilon$ and $\epsilon'$ is an element of $H^1(M;\pi_1(BO(2)))=H^1(M;\Z_2)$.
One can prove that this element is $w_1(E)-w_1(E')$, see e.\,g.~\cite[chapter\,VI,~\S6]{whitehead}.

Suppose we have chosen an isomorphism of $E$ and $E'$ restricted to the 1-skeleton of $M$.
The second obstruction is an element of $H^2(M;\{\pi_2(BO(2))\})$.
Here $\{\pi_2(BO(2))\}$ is a $\pi_1(M)$-module, i.\,e.\ a local system on $M$ with fiber $\Z\simeq \pi_2(BO(2))$
(the actions of $\pi_1(M)$ induced by $\epsilon$ and $\epsilon'$ are the same since $w_1(E)=w_1(E')$).
It turns out that the second obstruction is $\ee(E)-\ee(E')$.

\section{Twisted tangent bundle}\label{s:twisted-tangent-bundle}

Suppose we are given closed surfaces $M,N$ and a $(C,P)$-immersion $f:M\to N$.
We will describe 
the vector bundle $f^*(TN)$ in terms of $C$ and $P$.
To this end we will consider an operation of regluing a vector bundle 
over a codimension~1 submanifold of the base. 
We call such an operation {\it a~modification} of the vector bundle.

\subsection{Folding modification of the tangent bundle}\label{s:tcm}

In this subsection we let $M,N$ be closed $n$-manifolds and let $C\subset M$ be a closed submanifold of codimension~$1$.
We will describe $f^*(TN)$ for a $C$-immersion $f:M\to N$
and then compute its characteristic classes in terms of $[C]$.

\begin{definition}\label{def:tcm}
Let $T^C M$ be the rank $n$ vector bundle obtained by regluing $TM$ along $C$
using the nonidentity orthogonal involution of $TM|_C$ which is identity on $TC$.
We canonically identify $TM$ and $T^C M$ over $M\setminus C$. 
Also, we fix a natural imbedding $TC\subset T^C M$.
\end{definition}

Given $C\subset M$, the isomorphism class of the vector bundle $T^CM$ does not depend on the Riemannian metric on $M$.
Indeed, let $E_0$, respectively $E_1$, be the bundle $T^CM$ constructed using the Riemannian metric $g_0$, respectively $g_1$.
Note that $g_t=(1-t)g_0+tg_1$ is a Riemannian metric for all $t\in[0;1]$.
Using $g_t$ one can construct a rank $n$ vector bundle $E\to M\times[0;1]$
such that $E|_{M\times 0}\simeq E_0$ and $E|_{M\times 1}\simeq E_1$.
Therefore $E_0$ and $E_1$ are isomorphic.

\begin{figure}[h]
\center{\includegraphics
{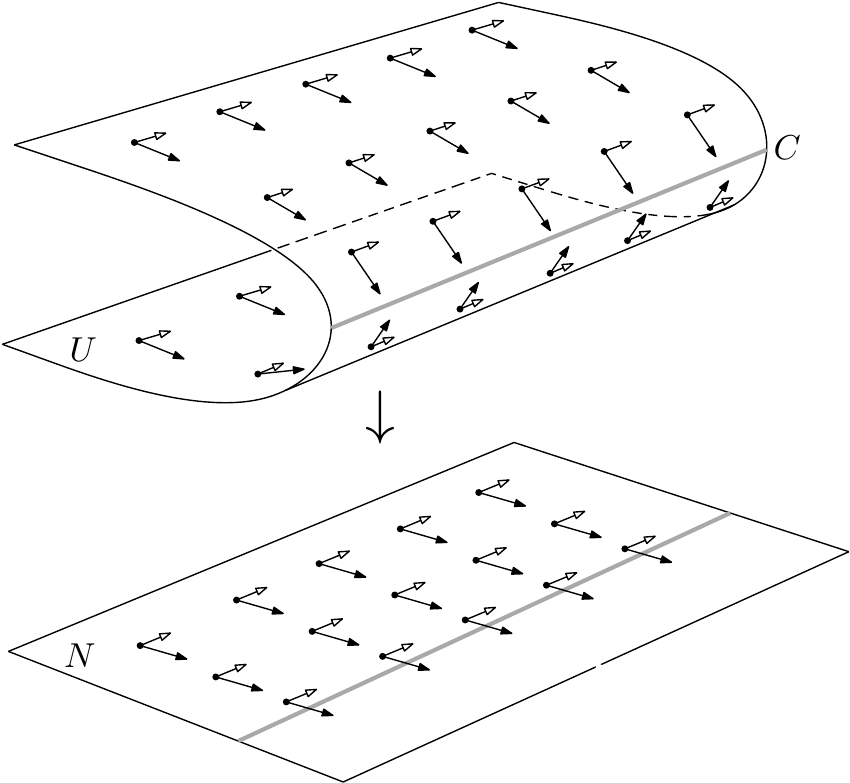}}
\caption{A trivialization of $T^CM|_U$ and its image under $\phi$ in $TN$.}\label{fig:tcm-frames}
\end{figure}

\begin{proposition}\label{pr:tcm-isomorphism}
If $f:M\to N$ is a $C$-immersion, then for every tubular neighborhood $U\supset C$
there exists a fiberwise isomorphism $\phi:T^C M\to TN$ covering $f$ such that $\phi|_{T(M\setminus U)}=df|_{T(M\setminus U)}$ and $\phi|_{TC}=df|_{TC}$
\end{proposition}

\begin{proof}
We can define $\phi$ on $U$ as shown in Fig~\ref{fig:tcm-frames}.
Note that the section of $T^CM$ shown 
as a vector field normal to $C$ and directed towards $C$
can be extended to a continuous section of $T^CM$.
This section maps to the vector field in $N$ normal to $f(C)$.
For vectors in $T^CM$ which are parallel to $C$ we define $\phi$ as $df$.
%
\end{proof}

\begin{proposition}\label{pr:w-tcm}
For a codimension 1 closed submanifold $C\subset M$ we have $w_1(T^C M)=w_1(M)+[C]$.
\end{proposition}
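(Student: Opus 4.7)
My strategy is to reduce to a computation on line bundles by passing to the determinant, and then identify the resulting twisting line bundle as the one Poincaré dual to $[C]$.

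First I would observe that, for any rank $n$ vector bundle $E$, one has $w_1(E)=w_1(\det E)$, so it suffices to compute $\det T^CM$. The clutching involution of $TM|_C$ used in Definition~\ref{def:tcm} is identity on $TC$ (a rank $n-1$ subspace) and is $-1$ on the rank-one normal bundle; hence its determinant is $-1$ at every point of $C$. Consequently, the line bundle $\det T^CM$ is obtained from $\det TM$ by regluing along $C$ with the constant clutching map $-1$. Formally, I would write
\[
\det T^CM \;\simeq\; \det TM \;\otimes\; L_C,
\]
where $L_C$ is the line bundle on $M$ obtained from the trivial line bundle on a tubular neighborhood $U\supset C$ (cut open along $C$) and the trivial line bundle on $M\setminus C$ by regluing across $C$ with the sign flip. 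This isomorphism is constructed by choosing a trivialization of $\det TM|_U$ compatible with an orientation of the normal direction to $C$, and comparing the two clutchings.

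Second, I would verify that $w_1(L_C)=[C]\in H^1(M;\Z_2)$. For this, take any smooth loop $\gamma:S^1\to M$ transverse to $C$ and restrict $L_C$ to $\gamma$. The monodromy of $L_C$ around $\gamma$ is $(-1)^{\#(\gamma\cap C)}$ by construction, so the pairing of $w_1(L_C)$ with $[\gamma]$ equals $\#(\gamma\cap C)\bmod 2=[C]\frown[\gamma]$. Since $w_1$ of a line bundle is detected by its pairings against $H_1(M;\Z_2)$, this gives $w_1(L_C)=[C]$.

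Combining these two steps and using additivity of $w_1$ under tensor product of line bundles,
\[
w_1(T^CM)=w_1(\det T^CM)=w_1(\det TM)+w_1(L_C)=w_1(M)+[C],
\]
which is the claim. The main obstacle is step one, namely writing down the tensor factorization of $\det T^CM$ cleanly from the clutching description; once that is in place, the identification $w_1(L_C)=[C]$ is a standard Poincaré duality computation. As a sanity check, one may redo the whole argument directly by evaluating $w_1(T^CM)$ on a loop $\gamma$ transverse to $C$ and observing that each intersection point of $\gamma$ with $C$ contributes a factor $-1$ to the orientation monodromy of $T^CM|_\gamma$ (because the orthogonal involution fixing $TC$ has determinant $-1$), so that $w_1(T^CM)\frown[\gamma]=w_1(M)\frown[\gamma]+[C]\frown[\gamma]$ for every $\gamma$.
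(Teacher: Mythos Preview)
Your proposal is correct. In fact, your ``sanity check'' at the end is precisely the argument the paper gives: one evaluates $w_1(T^CM)$ on a loop $\gamma$ transverse to $C$ and observes that the regluing involution, being the identity on $TC$ and $-1$ on the normal direction, has determinant $-1$, so each intersection with $C$ flips the orientation monodromy of $T^CM|_\gamma$ once. This yields $w_1(T^CM)\frown[\gamma]=w_1(M)\frown[\gamma]+[C]\frown[\gamma]$ for all such $\gamma$, hence the claim.

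Your main route through $\det T^CM\simeq\det TM\otimes L_C$ is a mild repackaging of the same computation: passing to the determinant line bundle isolates exactly the sign of the clutching involution, and the identification $w_1(L_C)=[C]$ is again the monodromy calculation. What this buys you is a cleaner separation of the two ingredients (the ambient orientation class $w_1(M)$ and the twist $[C]$), and it makes transparent why only the \emph{sign} of the regluing matters for $w_1$. The paper's version is shorter because it skips the tensor factorization and goes straight to the loop test; yours is slightly more structured and would generalize verbatim to any ``reglue a bundle along a hypersurface by an involution of constant determinant'' situation.
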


\begin{proof}
The lemma is a restatement of the following fact:
Take a closed curve $\alpha\subset M$ transversal to $C$.
Then $\alpha$ preserves an orientation of $T^C M$
if and only if
either $\alpha$ preserves an orientation of $TM$ and $|\alpha\cap C|\equiv0\mod2$,
or $\alpha$ reverses an orientation of $TM$ and $|\alpha\cap C|\equiv1\mod2$.
This fact follows directly from Definition~\ref{def:tcm}.
\end{proof}

\begin{proposition}\label{pr:ee-tcm}
If $[C]=0$, then $\ee(T^CM)=\pm\big(\chi (M_+)-\chi (M_-)\big)$
where $M_+,M_-\subset M$ are submanifolds with boundary such that
$M=M_+\cup M_-$ and $M_+\cap M_-=\partial M_+=\partial M_-=C$.

If $[C]\ne0$, then $\ee(T^CM)=w_n(M)\equiv \chi(M)-\chi(C)\mod2$.
\end{proposition}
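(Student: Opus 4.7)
The plan is to compute $\ee(T^CM)$ directly from Definition~\ref{def:euler-class} by exhibiting a section of $T^CM$ with isolated zeros tailored to each case.

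Suppose first that $[C]=0$, so $C$ separates $M$ as $M_+\cup M_-$. I will take vector fields $X_\pm$ on $M_\pm$ with finitely many interior zeros which along $\partial M_\pm=C$ equal the inward unit normal. Because the regluing involution of Definition~\ref{def:tcm} acts as $-1$ on the normal direction to $C$, the two inward normals get identified, so $X_+$ and $X_-$ paste into a continuous section $s$ of $T^CM$. The Poincar\'e--Hopf theorem for compact $n$-manifolds with inward-pointing boundary field gives total index $(-1)^n\chi(M_\pm)$ on each side. The main obstacle in this case will be to assemble the two one-sided totals into a single class in $H^n(M;\ZZ_{T^CM})$ with the correct signs. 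Proposition~\ref{pr:w-tcm} gives $w_1(T^CM)=w_1(M)$, so $\ZZ_M\otimes\ZZ_{T^CM}$ is globally trivializable; on each $M_\pm$ the canonical identification $T^CM|_{M_\pm}\simeq TM|_{M_\pm}$ yields a natural trivialization compatible with the usual index count. But these two natural trivializations disagree across $C$, because the gluing involution reverses the fiber orientation of $T^CM$ relative to $TM$. In a single global trivialization the $M_-$-contributions therefore appear with the opposite sign from the $M_+$-contributions, giving $\ee(T^CM)=\pm\bigl(\chi(M_+)-\chi(M_-)\bigr)$.

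Now suppose $[C]\ne0$. Then Proposition~\ref{pr:w-tcm} yields $w_1(T^CM)\ne w_1(M)$, so Proposition~\ref{pr:n-th-cohomology} gives $H^n(M;\ZZ_{T^CM})\simeq\Z_2$, and Proposition~\ref{pr:ee-equals-w-mod2} identifies $\ee(T^CM)$ with $w_n(T^CM)\in H^n(M;\Z_2)$. I fix a small tubular neighborhood $U$ of $C$ and consider on $U\setminus C$ the unit vector field whose value at each point is the normal direction pointing away from the nearest point of $C$. The two one-sided limits along $C$ are opposite normals, which the regluing involution identifies, so this field extends to a nowhere-zero section of $T^CM|_U$. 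I will then extend it to a section $s$ of $T^CM$ on all of $M$ having only finitely many interior zeros, all in $M\setminus U$, and with $s|_{\partial U}$ still pointing outward from $U$--hence inward into $M\setminus U$. Poincar\'e--Hopf on the compact $n$-manifold $M\setminus U$ then gives total index $(-1)^n\chi(M\setminus U)$; inclusion-exclusion with $\chi(U)=\chi(C)$ and (for $n=2$) $\chi(\partial U)=0$ yields $\chi(M\setminus U)=\chi(M)-\chi(C)$. Reducing mod~$2$ produces $\ee(T^CM)\equiv\chi(M)-\chi(C)\pmod 2$, and the standard congruence $w_n(M)\equiv\chi(M)\pmod 2$ together with $\chi(C)=0$ for a closed $1$-manifold delivers the claimed equality $\ee(T^CM)=w_n(M)\equiv\chi(M)-\chi(C)\pmod 2$.
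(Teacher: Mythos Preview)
Your proof is correct and follows essentially the same strategy as the paper's: build a section of $T^CM$ by taking inward-normal vector fields on the pieces (on $M_\pm$ when $[C]=0$, on the complement of a tubular neighborhood / on the cut manifold when $[C]\ne0$), observe that the regluing involution makes these match along $C$, and read off the Euler class from Poincar\'e--Hopf. The only cosmetic differences are that the paper cuts $M$ along $C$ to obtain $M'$ and writes the boundary index as $\chi(M_\pm)-\chi(C)$ (resp.\ $\chi(M')-\chi(\partial M')$), while you use a tubular neighborhood and the equivalent form $(-1)^n\chi$; you also spell out the final step $w_n(M)\equiv\chi(M)\pmod 2$ that the paper leaves implicit.
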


Recall that if $[C]=0$, then the sign of $\ee(T^CM)$ is not defined
as it depends on the choice of the isomorphism $H^n(M;\ZZ_{T^CM})\simeq\Z$,
and if $[C]\ne0$, then $\ee(T^CM)=w_n(M)$ by Proposition~\ref{pr:ee-equals-w-mod2}.

\begin{proof}
The proof is similar to the proof of Proposition~\ref{pr:ee-tm-equals-chi-m}.

Suppose $[C]=0$.
Take triangulations of $M_+$ and $M_-$.
Consider the vector fields on $\partial M_+$ and $\partial M_-$
that consist of unit normal vectors directed into $M_+$, respectively $M_-$.
The total indices of any of their extensions to $M_+$ and $M_-$
are equal to $\chi(M_+)-\chi(C)$, respectively $\chi(M_-)-\chi(C)$.
In order to see this one can take
extensions with a single zero of index $(-1)^k$ at the center of every $k$-simplex that does not belong to $\partial M_+$, respectively $\partial M_-$.
Any two such vector fields together form a section $s$ of $T^C M$.

We have natural isomorphisms $T^CM|_{M_+}\simeq TM|_{M_+}$ and $T^CM|_{M_-}\simeq TM|_{M_-}$.
Therefore a local orientation of $M$ determines orientations of the fibers of $T^CM|_{M_+}$ and $T^CM|_{M_-}$.
Note that when we cross $C$ once, the orientation of a fiber of $T^C M$ changes with respect to the local orientation of $M$.
Therefore the local system $\ZZ_M\otimes\ZZ_E$ changes sign.
So the obstruction $\ee(T^CM)$ is equal to the difference of the total indices of $s|_{M_+}$ and $s|_{M_-}$.
This proves the proposition in the case $[C]=0$.

If $[C]\ne0$, the proof is similar.
Cut $M$ along $C$ and denote the resulting manifold with boundary by $M'$.
If we extend the inward unit normal vector field on $\partial M'$ to the whole $M'$,
then the projection of this extension to $T^CM$ will be a continuous section.
Consider an extension that has a single zero of index $(-1)^k$ at the center of
every $k$-simplex that does not belong to $\partial M'$.
The  total index of this section 
is equal to $\chi(M')-\chi(\partial M')=\chi(M)+\chi(C)-2\chi(C)$.
%
\end{proof}


\subsection{Modifications near cusp points}

In the rest of the paper we take $M,N$ to be closed surfaces.
We also fix a closed submanifold $C\subset M$
and let $P\subset C$ be a discrete subset with given directions. 

In this section we will describe how to construct the vector bundle $f^*(TN)$ where $f:M\to N$ is a $(C,P)$-immersion in terms of $C$ and $P$.

\begin{definition}\label{def:ttb}
The {\it twisted tangent bundle $T^{C,P} M$} is a rank 2 vector bundle over $M$
obtained from $T^C M$ using one of the following two modifications:


1) We remove the tangent bundle $TD_i$ of a small disk $D_i$ near each point $p_i\in P$
as shown in Fig.~\ref{fig:tcpm-definition},~left,
and glue it back using the loop
$t\mapsto\left(\begin{array}{cc} \cos t & -\sin t \\ \sin t & \cos t \\\end{array}\right)$
in $SO(2)$.

2) We remove the tangent bundle $TD_i$ of a small disk $D_i$ near each point $p_i\in P$
as shown in Fig.~\ref{fig:tcpm-definition},~right,
and glue it back using the loop
$t\mapsto\left(\begin{array}{cc} -\cos t & -\sin t \\ \sin t & -\cos t \\\end{array}\right)$
in $SO(2)$.

In both cases the value $t=0$ corresponds to the bottom point of the disk, and as $t$ goes from 0 to $2\pi$ we go around the disk counterclockwise.
\end{definition}

\begin{figure}[h]
\center{\includegraphics
{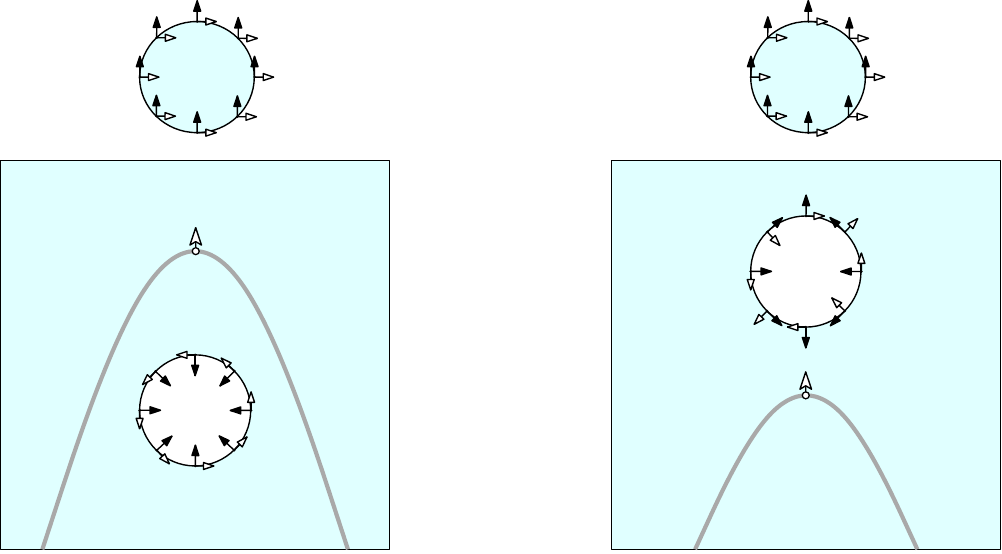}}
\caption{Two types of modification we use to obtain $T^{C,P}M$ from $T^CM$.}\label{fig:tcpm-definition}
\end{figure}

Clearly, in the both cases it does not matter how small and how close to $C$ the disk $D_i$ will be,
as this does not change the bundle up to isomorphism.
Moreover, it turns out that it does not matter whether we use  operation 1 or 2, as both give us isomorphic bundles, see the proof of Proposition~\ref{pr:tcpm-isomorphism}.

As above, 
we identify $TM$ with $T^{C,P}M$ outside $C$.
Also, we will think of $TC$ outside any sufficiently small neighborhood of $P$ as a subbundle of $T^{C,P}M$.

\begin{proposition}\label{pr:tcpm-isomorphism}
If $f:M\to N$ is a $(C,P)$-immersion, then for any tubular neighborhoods $U\supset C$ and $V\supset P$
there exists a fiberwise isomorphism $\phi:T^{C,P} M\to TN$ that covers $f$ and such that
$\phi|_{T(M\setminus U)}=df|_{T(M\setminus U)}$ and $\phi|_{T(C\setminus V)}=df|_{T(C\setminus V)}$
\end{proposition}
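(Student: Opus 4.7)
The plan is to build $\phi$ in two stages: first produce an isomorphism on the complement of small disks around the cusp points (where $f$ has only folds), then extend it across each such disk using the twist built into $T^{C,P}M$.

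Shrink $V$ if necessary so that $V\subset U$ and $V=\bigsqcup_i D_i$ is a disjoint union of small open disks centered at the cusp points $p_i\in P$. On $M\setminus V$ the restriction of $f$ has only fold singularities, along $C\setminus V$, so the construction in the proof of Proposition~\ref{pr:tcm-isomorphism} yields a fiberwise isomorphism $\phi_0\colon T^CM|_{M\setminus V}\to f^*(TN)|_{M\setminus V}$ that agrees with $df$ on $M\setminus U$ and on $C\setminus V$. Since $T^{C,P}M$ and $T^CM$ coincide outside $V$ by Definition~\ref{def:ttb}, we may regard $\phi_0$ as an isomorphism $T^{C,P}M|_{M\setminus V}\to f^*(TN)|_{M\setminus V}$.

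It remains to extend $\phi_0$ across each disk $D_i$. Both $T^{C,P}M|_{D_i}$ and $f^*(TN)|_{D_i}$ are rank $2$ bundles over a disk, hence trivializable, so the obstruction to the extension is the class in $\pi_1(GL(2,\R))=\Z$ of the transition loop on $\partial D_i$ comparing $\phi_0$ with chosen trivializations of the two bundles over $D_i$. I would compute this obstruction in the standard Whitney cusp normal form $f(x,y)=(x,y^3-xy)$, in which $C\cap D_i=\{x=3y^2\}$ passes through the origin with tangent direction equal to $\ker df|_{(0,0)}$, and from which the cusp direction is read off directly. A direct calculation, tracking the frame of $T^CM$ (the ``vertical'' normal section plus the tangent-to-$C$ section used in Proposition~\ref{pr:tcm-isomorphism}) and its $df$-image around $\partial D_i$, shows that the transition loop has winding number $\pm 1$, the sign being dictated by the cusp direction.

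By Definition~\ref{def:ttb}, the bundle $T^{C,P}M$ is obtained from $T^CM$ by regluing each $D_i$ along a loop in $SO(2)$ whose class in $\pi_1(SO(2))=\Z$ is $\pm 1$. Choosing the modification type whose winding cancels the obstruction above, we obtain a continuous extension of $\phi_0$ over $D_i$; doing so at every cusp yields the required $\phi$ on all of $M$. The main technical hurdle is the explicit normal-form winding computation together with the bookkeeping of signs in terms of the cusp direction; the remark after Definition~\ref{def:ttb}, that the two modification types produce isomorphic bundles, guarantees that the extension exists regardless of which local sign convention one picks.
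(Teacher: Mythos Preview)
Your two-stage outline coincides with the paper's: away from the cusp neighborhoods you invoke Proposition~\ref{pr:tcm-isomorphism} to get an isomorphism $\phi^\circ$ (your $\phi_0$), and then you extend across each $V_i$. The only difference is in presentation of the local step. The paper exhibits explicit frame fields $s_1,s_2$ on $V_i$ and $s'_1,s'_2$ on $f(V_i)$ (Figures~\ref{fig:dv-trivialisation} and~\ref{fig:v-trivialisation}) and checks pictorially that they match $\phi^\circ$ on $\partial V_i$, treating \emph{both} modification types of Definition~\ref{def:ttb} in one picture. You instead phrase the same question as an obstruction in $\pi_1(GL^+(2,\R))\simeq\Z$ and assert that the winding is $\pm1$.

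Two gaps remain. First, the winding computation is the entire content of the local step, and you only announce the answer rather than carry it out; the paper's explicit frames \emph{are} that computation, done graphically. Second, your final sentence appeals to the remark after Definition~\ref{def:ttb} (that the two modification types yield isomorphic bundles) to cover whichever sign you did not handle directly, but in the paper that remark is justified by reference to the proof of Proposition~\ref{pr:tcpm-isomorphism} itself, so using it here is circular. Even granting an abstract isomorphism between the two versions of $T^{C,P}M$, you would still need it to be the identity on $TM|_{M\setminus U}$ and on $TC|_{C\setminus V}$ to transport the boundary conditions on $\phi$, which does not follow automatically. The paper sidesteps this by verifying both modification types simultaneously in Figure~\ref{fig:v-trivialisation}.
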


\begin{figure}[h]
\center{\includegraphics
{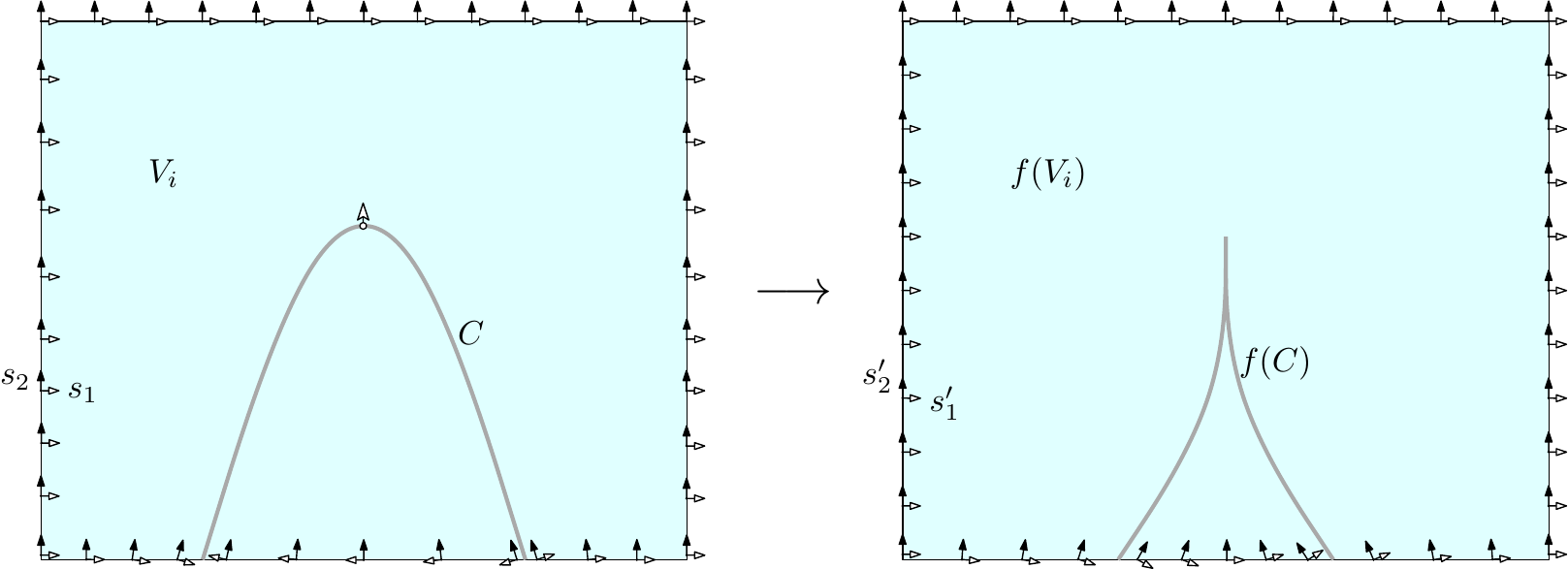}}
\caption{The trivialisation of $TN$ over $\partial f(V_i)$ and its preimage in $T^{C,P}M$.}\label{fig:dv-trivialisation}
\end{figure}

\begin{figure}[h]
\center{\includegraphics
{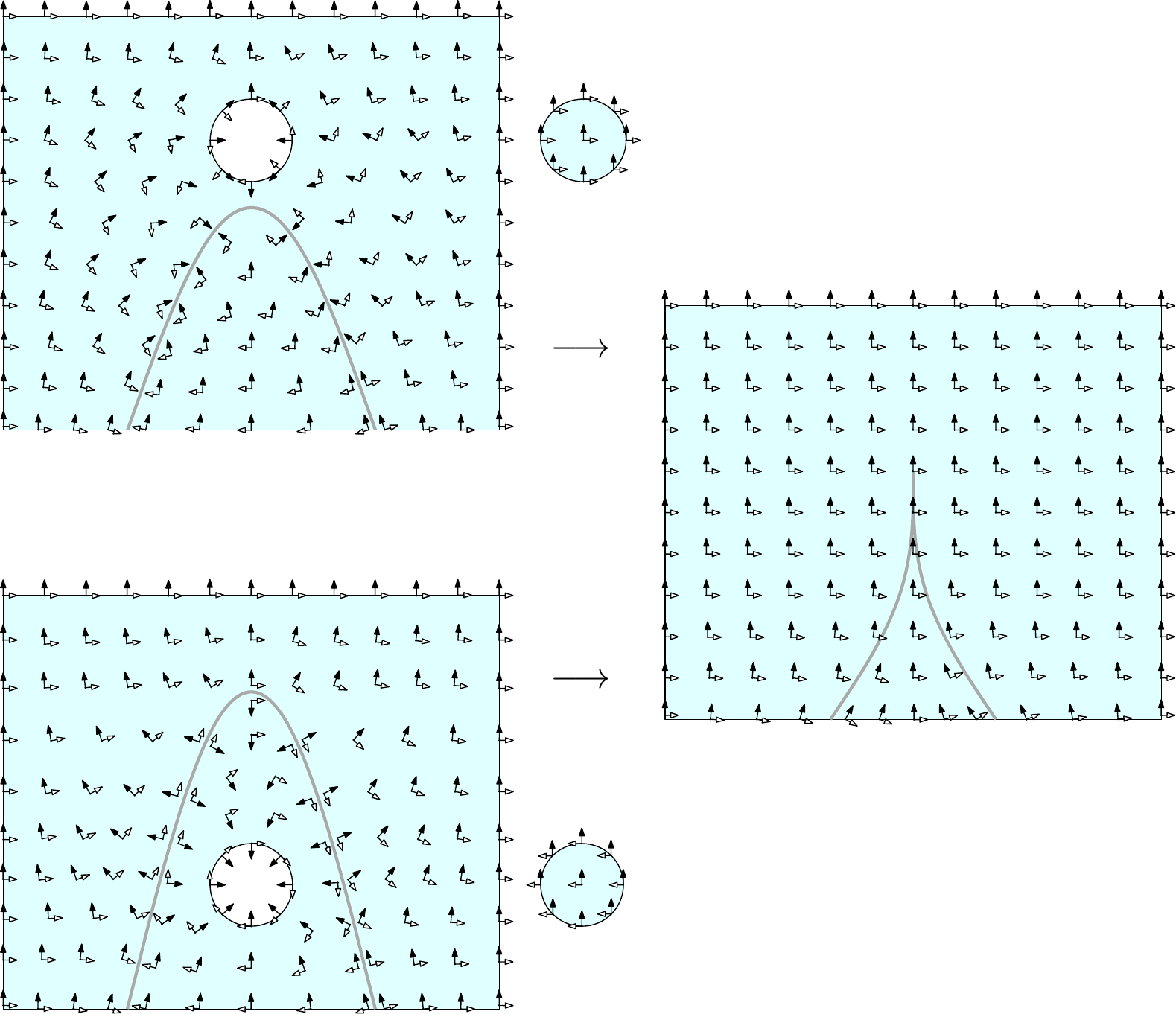}}
\caption{A fiberwise isomorphism $T^{C,P}M|_{V_i}\to TN|_{f(V_i)}$ defined on the trivializations.}\label{fig:v-trivialisation}
\end{figure}

\begin{proof}
Outside $V$ a fiberwise isomorphism $T^{C,P}M|_{M\setminus V}\to TN$ covering $f_{M\setminus V}$
can be constructed as in the proof of Proposition~\ref{pr:tcm-isomorphism}.
We denote it by $\phi^\circ$.

Let $V_i$ be a component of $V$ that contains a cusp point $p_i\in P$.
Consider the field of tangent frames on $ f(\partial V_i)$ shown in Fig.~\ref{fig:dv-trivialisation},~right;
near $f(C)\cap f(\partial V)$ the first vector field $s'_1$ is normal to $f(C)$, and the second one, $s'_2$, is parallel to $f(C)$.
We take their preimages $s_1,s_2$ by the fiberwise isomorphism $\phi^\circ$, see Fig.~\ref{fig:dv-trivialisation},~left.

As shown in Fig.~\ref{fig:v-trivialisation},
we can extend $s_1,s_2$ to the whole of $V_i$ to obtain a trivialization of $T^{C,P}M|_{V_i}$.
The left bottom part of Figure \ref{fig:v-trivialisation} corresponds to the first,
and the left top part to the second, operation used in the definition of $T^{C,P}M$.

Clearly, $s'_1,s'_2$ can be 
extended to the whole of $f(V_i)$
as a trivialization of $TN|_{f(V_i)}$, see Fig.~\ref{fig:v-trivialisation},~right.
Then we define a fiberwise isomorphism $T^{C,P}M|_{V_i}\to TN|_{f(V_i)}$ that covers $f$ by setting $s_1\mapsto s'_1$ and $s_2\mapsto s'_2$.
Note that the resulting fiberwise isomorphism is compatible with $\phi^\circ$ over $\partial V_i$.

To complete the construction of $\phi$ we define it in a similar way over the other components of $V$ and set $\phi|_{M\setminus V}=\phi^\circ$.
\end{proof}

\begin{definition}
A {\it twisted differential} of a $(C,P)$-immersion $f:M\to N$
is a fiberwise isomorphism $T^{C,P}M\to TN$ covering $f$
that coincides with the differential of $f$ outside a sufficiently small neighborhood of $C$,
and whose restriction to $TC$ coincides with the differential of $f|_C$ outside a sufficiently small neighborhood of $P$.
\end{definition}

We have just seen an example of a twisted differential in the proof of Proposition~\ref{pr:tcpm-isomorphism}.

\subsection{Characteristic classes of the twisted tangent bundle}

Recall that $M$ is a closed surface, 
$C\subset M$ is a closed 1-submanifold and $P\subset C$ is a discrete subset with given directions.

\begin{lemma}\label{l:w-tcpm}
We have $w_1(T^{C,P} M)=w_1(M)+[C]$.
\end{lemma}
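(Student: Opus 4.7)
The strategy is to reduce the claim to Proposition~\ref{pr:w-tcm} by showing that the cusp modifications do not affect $w_1$. Concretely, I will argue that $w_1(T^{C,P}M) = w_1(T^CM)$ and then invoke the already-established identity $w_1(T^CM) = w_1(M) + [C]$.

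The cleanest way to see the equality $w_1(T^{C,P}M) = w_1(T^CM)$ is via the determinant line bundle. Both modifications in Definition~\ref{def:ttb} are carried out by regluing $TD_i$ along $\partial D_i$ using a loop of matrices in $SO(2)$, which is the identity component of $O(2)$. In particular, the gluing maps have determinant $+1$ on every fiber, so applying the determinant functor collapses both modifications to the trivial one. Thus $\det(T^{C,P}M) \simeq \det(T^CM)$ as line bundles, and since for any real vector bundle $E$ we have $w_1(E) = w_1(\det E)$, it follows that $w_1(T^{C,P}M) = w_1(T^CM)$.

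If one prefers to avoid mentioning the determinant bundle, essentially the same argument can be phrased in terms of monodromy: any loop $\gamma \subset M$ can be homotoped so as to miss the finitely many small disks $D_i$, and over the complement of these disks the vector bundles $T^{C,P}M$ and $T^CM$ are canonically identified. Hence the monodromy of $\gamma$ (i.e.\ the sign by which it acts on $\ZZ_{T^{C,P}M}$) coincides with its monodromy in $T^CM$, which gives the same first Stiefel-Whitney class. There is no real obstacle here; the only thing to verify is that the particular loops used in Definition~\ref{def:ttb} lie in $SO(2)$, which is immediate from their explicit formulas. Combining this with Proposition~\ref{pr:w-tcm} yields $w_1(T^{C,P}M) = w_1(M) + [C]$.
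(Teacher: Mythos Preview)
Your proof is correct and follows essentially the same approach as the paper: reduce to Proposition~\ref{pr:w-tcm} by observing that the cusp modifications do not change $w_1$. The paper phrases this by restricting to a $1$-skeleton chosen disjoint from $P$, while you use the determinant line bundle (or equivalently push loops off the disks $D_i$); these are just two packagings of the same observation that the modification along each $D_i$ is by a loop in $SO(2)$ and hence is invisible to $w_1$.
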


\begin{proof}
The lemma follows from Proposition~\ref{pr:w-tcm}.
Indeed, the first Stiefel-Whitney class of a vector bundle over a $CW$-complex is determined
by the restriction of the bundle to the $1$-skeleton of the base.
If we choose a $CW$-structure of $M$ so that $P\cap\sk^1(M)=\es$, then clearly
the restrictions of $T^CM$ and $T^{C,P}M$ to $\sk^1(M)$ are isomorphic. 
Therefore $w_1(T^{C,P}M)=w_1(T^CM)=w_1(M)+[C]$.
\end{proof}

\begin{lemma}\label{l:ee-tcpm}
\vbox{If $[C]=0$, then $\ee(T^{C,P}M)=\pm\big(\chi(M_+)-\chi(M_-)-n_+ +n_-\big)$.
Here $M_+,M_-$ are defined as in Proposition~\ref{pr:ee-tcm} and
$n_+$ (respectively $n_-$) is the number of points of $P$ at which the direction vector is directed into $M_-$ (respectively $M_+$).
Call them positive (respectively negative).

If  $[C]\ne0$, then $\ee(T^{C,P}M)=w_2(T^{C,P}M)\equiv \chi(M)+|P|\mod2$.}
\end{lemma}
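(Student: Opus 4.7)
The lemma computes $\ee(T^{C,P}M)$ (the appearance of $w_2$ in the $[C]\ne 0$ case is only because $\ee = w_2$ modulo $2$ by Propositions~\ref{pr:n-th-cohomology} and~\ref{pr:ee-equals-w-mod2}). My plan is to build $T^{C,P}M$ from $T^CM$ by performing the local modification of Definition~\ref{def:ttb} one cusp at a time, and to track the effect on the Euler class at each step; the starting value $\ee(T^CM)$ has been computed in Proposition~\ref{pr:ee-tcm}.

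First I would establish the following general principle: if $E \to M$ is a rank $2$ bundle, $D \subset M$ an embedded disk over which $E$ is trivialized, and $E^\gamma$ the bundle obtained from $E$ by regluing $E|_D$ via a loop $\gamma : \partial D \to SO(2)$, then $\ee(E^\gamma) = \ee(E) \pm \deg(\gamma)$ with a sign depending only on a global orientation convention. To prove this I would pick a section $s$ of $E$ that is nonvanishing on $\overline D$; the corresponding section $s^\gamma$ of $E^\gamma$ coincides with $s$ outside $D$, but its restriction to $\partial D$ acquires winding number $\deg(\gamma)$ in the trivialization of $E^\gamma|_D$, so any extension across $D$ produces zeros whose total index is $\pm \deg(\gamma)$. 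Both loops in Definition~\ref{def:ttb} have degree $\pm 1$ in $\pi_1(SO(2))$, so each cusp modification changes $\ee$ by exactly $\pm 1$.

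The hard part is pinning down the sign at each cusp. My expectation is that the two variants of Definition~\ref{def:ttb} apply to disks $D_i$ sitting on opposite sides of $C$; this is precisely what makes them yield isomorphic bundles despite the loops having opposite signed degrees. Consequently the sign of the Euler class change at $p_i$ is controlled by the cusp's direction and not by the choice of variant. I would verify this by direct inspection of the frames $s_1,s_2$ built in the proof of Proposition~\ref{pr:tcpm-isomorphism} (Figures~\ref{fig:dv-trivialisation} and~\ref{fig:v-trivialisation}), checking that after a coherent global sign choice the section $s_1$ extended from outside $V_i$ acquires a zero of index $-1$ at a positive cusp and $+1$ at a negative cusp.

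The two cases of the lemma then follow. If $[C]=0$, I start from the section of $T^CM$ constructed in the proof of Proposition~\ref{pr:ee-tcm} with total index $\pm\bigl(\chi(M_+)-\chi(M_-)\bigr)$; applying the $|P|$ cusp modifications brings the total index to $\pm\bigl(\chi(M_+)-\chi(M_-)\bigr)-n_+ + n_-$, as required (the overall sign being absorbed into the ambiguity of $\ee(T^{C,P}M)$). If $[C]\ne 0$, since $\chi(C)=0$ Proposition~\ref{pr:ee-tcm} gives $\ee(T^CM)\equiv\chi(M)\pmod 2$, and each of the $|P|$ modifications shifts $w_2$ by $1\bmod 2$, yielding $\ee(T^{C,P}M) \equiv \chi(M)+|P| \pmod 2$.
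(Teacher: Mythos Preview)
Your approach is correct and is essentially a more abstract rephrasing of the paper's argument: both track how the cusp modifications of Definition~\ref{def:ttb} alter the Euler class of $T^CM$ computed in Proposition~\ref{pr:ee-tcm}, and both must settle the sign of the $\pm1$ shift at each cusp by inspecting the explicit frame picture.

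The difference is in how the sign is handled. You invoke a general regluing principle $\ee(E^\gamma)=\ee(E)\pm\deg\gamma$ and then promise to determine the sign by inspecting the frames from Proposition~\ref{pr:tcpm-isomorphism}; this is correct but leaves the crux as a deferred computation. The paper avoids a separate sign check by choosing the section of $T^CM$ cleverly: it arranges a zero of index~$1$ \emph{inside} each cusp (in $M_+$ for positive cusps, in $M_-$ for negative ones), so that Figure~\ref{fig:tcpm-definition} shows the modification literally deleting that zero. The sign then falls out of the bookkeeping $(\text{index on }M_+)-(\text{index on }M_-)$ with no further inspection needed. Your route is a bit more conceptual; the paper's is more self-contained because the picture does the work.

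One small point: in your final sentence for the $[C]=0$ case you write the result as $\pm\bigl(\chi(M_+)-\chi(M_-)\bigr)-n_++n_-$. Once a sign convention is fixed, the cusp contributions $-n_++n_-$ carry the same overall sign as $\chi(M_+)-\chi(M_-)$, so the $\pm$ should be placed outside the whole expression (as in the statement); otherwise swapping $M_+\leftrightarrow M_-$ would not match.
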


\begin{proof}
We will modify the construction from the proof of Proposition~\ref{pr:ee-tcm} to obtain a section of~$T^{C,P}M$.

Namely, suppose $[C]=0$.
Consider the inward unit normal vector field on $\partial M_+$.
Take its extension to $M_+$ with a zero of index $1$ near every positive element of $P$
(and possibly with some other zero points).
Similarly, extend the inward unit normal vector field from $\partial M_-$ to the whole of $M_-$
so that the extension has a single zero point of index $1$ near every negative element of $P$.
Let $P'$ be the set of the zeroes of the extended vector fields near points of $P$.
Note that $|P'|=|P|=n_++n_-$.

The total indices of these vector fields 
are equal to $\chi(M_+)-\chi(C)$, respectively $\chi(M_-)-\chi(C)$.
Together these vector fields form a section $s$ of $T^CM$.

Recall that the elements of $P'$ are ``inside'' the cusp with respect to given direction.
Figure~\ref{fig:tcpm-definition},~left, shows that we can modify $s$ in a neighborhood of $P'$
so as to get a section $\hat s$ of $T^{C,P}M$.
Note that $\hat s$ has $n_+$ less zeroes (respectively $n_-$ less zeroes) of index $1$
inside $M_+$ (respectively inside $M_-$).
So we can see that the total index of $\hat s|_{M_+}$ equals $\chi(M_+)-\chi(C)-n_+$,
and the total index of $\hat s|_{M_-}$ equals $\chi(M_-)-\chi(C)-n_-$.
Therefore the first part of the lemma follows.

To prove the lemma in the case $[C]\ne0$ one can modify the corresponding construction
from the proof of Proposition~\ref{pr:ee-tcm} in a similar way to the above.
\end{proof}

\section{Proof of Theorem \ref{th:homotopic}}
\label{s:proof-homotopic}

\begin{proof}

The necessity of all conditions of Theorem \ref{th:homotopic} follows
from Proposition~\ref{pr:tcpm-isomorphism} and Lemma~\ref{l:ee-tcpm} via functoriality of Stiefel-Whitney and Euler classes.
It remains to show that these conditions are sufficient.

\vspace{.5em}

Suppose we are given $M$, $N$, $C$, $P$ and $f$ for which conditions 1.1--1.3 from Theorem~\ref{th:homotopic}
hold.
By Lemmas~\ref{l:ee-universal} and \ref{l:ee-tcpm} the vector bundles $T^{C,P}M$ and $f^*(TN)$ are isomorphic.
Let $\psi: T^{C,P}M \to TN$ be a fiberwise isomorphism covering $f$.

Take a small compact neighborhood $V$ of $P$ that deformation retracts onto $P$.
We can deform $f$ in $V$ so that
it becomes smooth on $V$ and has
the prescribed singularities (namely, folds in $(C\cap V)\setminus P$ and cusps with given directions in $P$),
and so that for every $x\in V\setminus C$
the isomorphisms of the fibers $df|_{x},\psi|_{x}\in\Iso(T_xM,T_{f(x)}N)$ belong to the same component.
%
Then
we can also deform $\psi$
in the class of fiberwise isomorphisms covering $f$
so that the resulting $\psi|_V$ will be a twisted differential of $f|_V$.

Using the Hirsch's Theorem~\ref{th:hirsch} we can deform $f$ and $\psi$ stationary on $V$
so that $f|_{C\setminus P}$ becomes an immersion
and $\psi|_{C\setminus V}$ becomes equal to $d(f|_{C\setminus V})$.
Fig.~\ref{fig:df-on-c} shows how to do this when the images $\psi(TC)$ and $df(TC)$ differ on some segment of $C$.

\begin{figure}[h]
\center{\includegraphics
{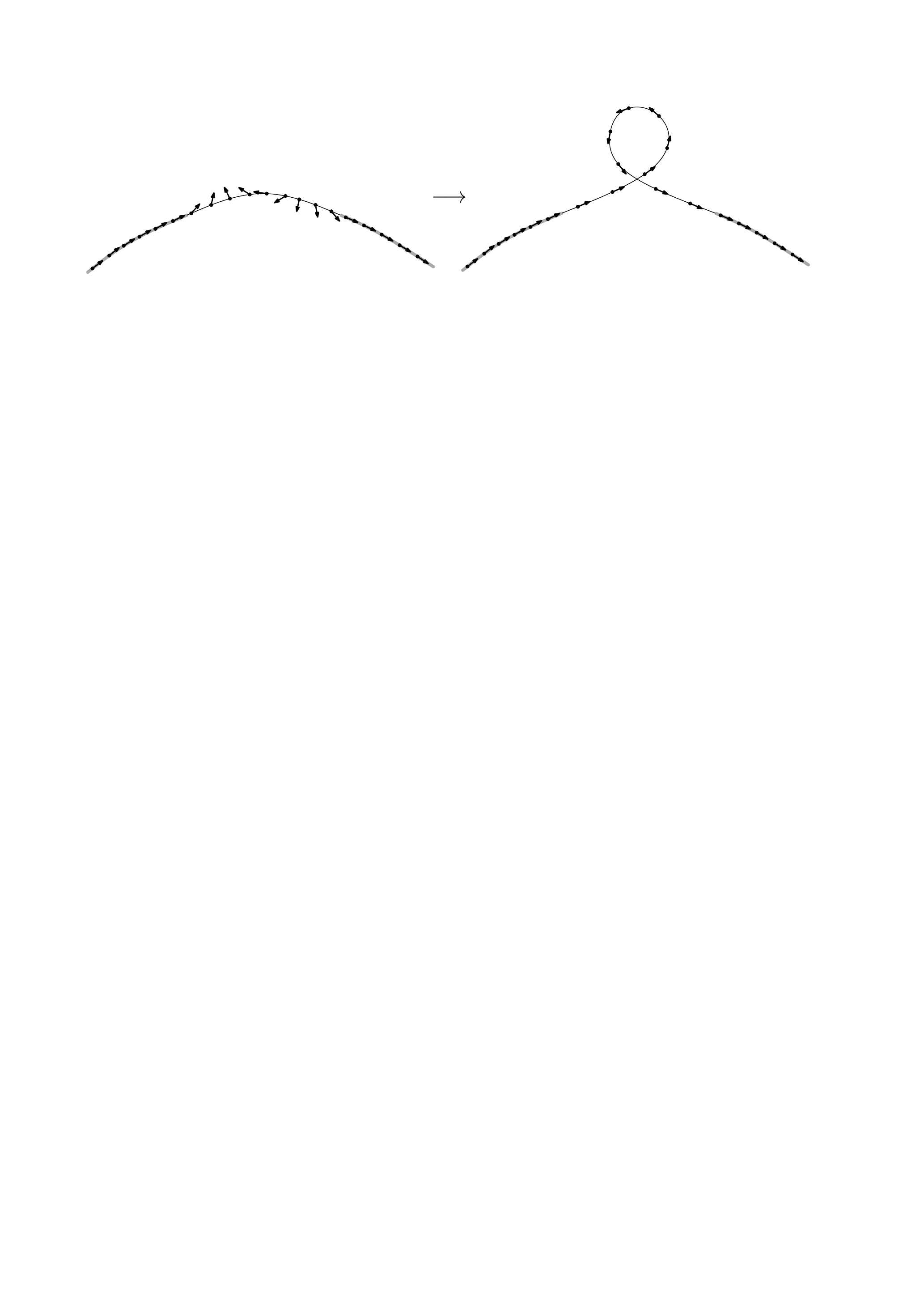}}
\caption{Deformation of $f$ to a map whose restriction to $C$ has a prescribed differential.}\label{fig:df-on-c}
\end{figure}

Take a tubular neighborhood $U\supset C$.
We can deform $f$ stationary on $V\cup C$ 
so that it becomes smooth in $U$ and $f|_U$ turn to a $(C,P)$-immersion.
Then we deform $\psi$ stationary on $V\cup C$ so that $\psi|_U$ becomes a twisted differential of $f|_U$.
We can do this since $U$ deformation retracts onto $C$.

As a result, $\psi|_{\partial U}=df|_{\partial U}$.
Define the morphism $\phi:TM\to TN$ as $df$ in $U$ and as $\psi$ outside~$U$.
Then $\phi|_{T(M\setminus V)}$ is a $C$-monomorphism.
We can now complete the proof using Eliashberg's Theorem~\ref{th:eliash}.
\end{proof}

\section{Existence of maps with prescribed folds and cusps}

\subsection{Surfaces and curves}\label{s:curves}

The following statements about surfaces and curves will be used in the proof of Theorem~\ref{th:existence}.

\begin{proposition}\label{pr:basis-curves}
For a closed nonorientable surface $S$ of nonorientable genus $g$ there is a basis $b_1,\ldots,b_g\in H^1(S,\Z_2)$
such that $b_i^2=w_2(S)$ and $b_i\smile b_j=0$ for $i\ne j$.
\end{proposition}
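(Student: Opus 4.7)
The plan is to realize $S$ as the standard connected sum $N_g=\#_{i=1}^g \R P^2$ and take as basis the Poincar\'e duals of the core circles of the projective-plane summands. Concretely, I would fix pairwise disjoint embedded M\"obius bands $U_1,\ldots,U_g\subset S$, one lying inside each $\R P^2$ summand, and let $c_i\subset U_i$ be its core. The standard $CW$-structure on $S$ with one $0$-cell, the $g$ edges $c_1,\ldots,c_g$, and one $2$-cell attached along the word $c_1^2c_2^2\cdots c_g^2$ exhibits $[c_1],\ldots,[c_g]$ as a basis of $H_1(S,\Z_2)$. Let $b_1,\ldots,b_g\in H^1(S,\Z_2)$ be the basis Poincar\'e-dual to them, which is well defined since for $\Z_2$-coefficient Poincar\'e duality no orientation twisting is needed.

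Next I would verify the two cup-product conditions via mod-$2$ intersection numbers. Under the identification of $\smile:H^1(S,\Z_2)\otimes H^1(S,\Z_2)\to H^2(S,\Z_2)\simeq\Z_2$ with the intersection pairing on $H_1(S,\Z_2)$, one has $b_i\smile b_j = [c_i]\cdot [c_j] \pmod 2$. For $i\ne j$ this vanishes because the curves $c_i$ and $c_j$ are disjoint, giving the off-diagonal condition. For $i=j$, the one-sidedness of $c_i$ (its tubular neighborhood $U_i$ is a M\"obius band) forces any generic push-off of $c_i$ to meet $c_i$ transversely in an odd number of points, so $b_i^2$ equals the nonzero element of $H^2(S,\Z_2)$.

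The final step is to match this with $w_2(S)$. Applying Proposition~\ref{pr:ee-tm-equals-chi-m} together with Proposition~\ref{pr:ee-equals-w-mod2} to the tangent bundle $TS$ gives $w_2(S)\equiv\chi(S)\equiv g\pmod 2$. When $g$ is odd this agrees with the nonzero class computed above, so the M\"obius-core basis already satisfies all the required relations. The main obstacle is the case of even $g$, in which $w_2(S)=0$ while each M\"obius-core basis element still squares to the nonzero class; here one has to replace the basis by classes lying in the kernel of the linear self-cup map $\alpha\mapsto\alpha^2=w_1(S)\smile\alpha$, for instance by passing to suitable sums of the $b_i$'s (such as $b_i+b_g$ for $i<g$), and then verify that a careful choice of these combinations still spans $H^1(S,\Z_2)$.
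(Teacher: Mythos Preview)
For odd $g$ your argument coincides with the paper's: both take $S=\#_g\R P^2$ and let $b_i$ be the class of the core circle in the $i$-th crosscap. The paper's proof is terse and does not separate the two parities, while you add the verification via mod-$2$ intersection numbers; this is fine and is exactly what the paper has in mind.

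Your diagnosis of the even-$g$ case is correct, but your proposed repair cannot succeed: for even $g$ the statement as written is simply false. If $w_2(S)=0$ you would be asking for a basis $b_1,\ldots,b_g$ with $b_i\smile b_j=0$ for \emph{all} pairs $(i,j)$, diagonal included; by bilinearity this forces the whole cup pairing $H^1(S,\Z_2)\times H^1(S,\Z_2)\to H^2(S,\Z_2)$ to vanish, contradicting Poincar\'e duality. Equivalently, the squaring map $\alpha\mapsto\alpha^2=w_1(S)\smile\alpha$ is onto (since $w_1\ne 0$), so its kernel has dimension $g-1$ and cannot contain a basis; your candidates $b_i+b_g$ for $i<g$ span exactly this kernel and there is no way to add a $g$-th independent element with square zero.

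The paper's own proof does not handle even $g$ either---it offers the same M\"obius-core basis without the parity check you carried out. This causes no trouble for the paper's main argument because the proposition is only invoked after it has already been established that $w_2(N)\ne0$, i.e.\ only in the odd-genus situation; the hypothesis $w_2(S)\ne0$ should really be part of the statement.
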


\begin{proof}
Note that $S$ can be viewed as the connected sum of $g$ copies of $\R P^2$.
Let $C_i$ be a curve in the $i$-th $\R P^2$ that changes the orientation.
Then we set $b_i=[C_i]$.
\end{proof}

Recall that a closed curve is called {\it simple} if it has no self-intersections.

\begin{proposition}\label{pr:independent-curves}
Let $S$ be a closed surface.
Suppose $a_1,\ldots,a_k\in H^1(S,\Z_2)$ are linearly independent elements such that $a_i\smile a_j=0$ for $i\ne j$.
Then there are disjoint simple closed curves $C_i\subset S,\ i=1,\ldots,k$ such that $[C_i]=a_i$.
\end{proposition}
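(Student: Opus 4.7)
The plan is to induct on $k$, producing one new simple closed curve at each step disjoint from those already constructed. Two elementary ingredients are needed. First, every nonzero class in $H^1(S;\Z_2)\simeq H_1(S;\Z_2)$ on a closed connected surface is representable by an embedded simple closed curve: take the transverse preimage of a point under a classifying map $S\to\R P^\infty$, obtaining an embedded $1$-submanifold, then band-sum its components within $S$ to a single circle (possible since in a connected surface any embedded $1$-submanifold with at least two components has two components bordering a common complementary region). Second, band-surgery on embedded $1$-submanifolds preserves the $\Z_2$-homology class.

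For $k=1$, apply the first ingredient. For the inductive step, assume disjoint simple closed curves $C_1,\ldots,C_{k-1}$ with $[C_i]=a_i$ have been built. Realize $a_k$ by a simple closed curve $C$ transverse to $\bigcup_{i<k}C_i$. The cup-product hypothesis yields $|C\cap C_i|\equiv a_i\smile a_k\equiv 0\pmod{2}$ for each $i<k$. I then eliminate intersections in pairs: take two consecutive points $p,q\in C\cap C_i$ along $C_i$; the sub-arc $\alpha\subset C_i$ from $p$ to $q$ meets $C$ only at its endpoints and, since the $C_j$ are pairwise disjoint, is disjoint from every $C_j$ with $j\neq i$. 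Push two parallel copies of $\alpha$ off $C_i$ to one side into a thin tubular neighborhood, and splice them into $C$ in place of the two short sub-arcs of $C$ issuing from $p,q$ on that side. The resulting embedded $1$-submanifold lies in the same $\Z_2$-class as $C$, has $|C\cap C_i|$ decreased by two, and, for a sufficiently thin tube, introduces no new intersection with any other $C_j$. Iterating yields an embedded $1$-submanifold $C'\subset S\setminus\bigcup_{i<k}C_i$ with $[C']=a_k$.

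Finally, to turn $C'$ into a single simple closed curve $C_k$, band-sum its components pairwise along arcs lying in $S\setminus\bigcup_{i<k}C_i$; within any single complementary region, two components of $C'$ lying there can be joined directly as in the first ingredient. The \emph{main obstacle} is that components of $C'$ may lie in different complementary regions of $\bigcup_{i<k}C_i$, so direct band-summing would force the connecting arc to cross some $C_i$. I would handle this by combining a merging band-sum (which introduces a pair of new intersections with the crossed $C_i$) with a subsequent intersection-removing surgery as above, choosing the latter so that it does not resplit the merged components; the combinatorial possibility of such choices is ensured by the linear independence of $a_k$ from $\{a_1,\ldots,a_{k-1}\}$, which rules out the pathological configurations in which every removal would force a resplit. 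Making this final step rigorous is the principal technical point of the argument.
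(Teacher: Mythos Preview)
Your overall strategy---induct on $k$, realize $a_k$ by a simple closed curve, remove intersections with the previously built $C_1,\ldots,C_{k-1}$ in pairs using the cup-product hypothesis, then band-sum the resulting components---is exactly the paper's approach. The difference lies in the final step, where you manufacture an obstacle that does not exist.

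You worry that components of $C'$ may lie in different complementary regions of $\bigcup_{i<k}C_i$, and you try to invoke the independence of $a_k$ from $\{a_1,\ldots,a_{k-1}\}$ to justify a vague merge-then-remove procedure. This is the wrong use of linear independence, and the workaround is left unproven. The clean observation (which the paper makes) is that the linear independence of $a_1,\ldots,a_{k-1}$ \emph{among themselves} already forces $S\setminus\bigcup_{i<k}C_i$ to be connected: if it were disconnected, the closure of any component would be a $\Z_2$ $2$-chain whose boundary is a nonempty sub-sum of the $C_i$, giving a nontrivial relation $\sum_{i\in I}[C_i]=0$. Once you know the complement is connected, all components of $C'$ lie in that single region and can be band-summed there directly---no crossings of the $C_i$ are ever introduced, and your ``principal technical point'' evaporates.
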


\begin{figure}[h]
\center{\includegraphics{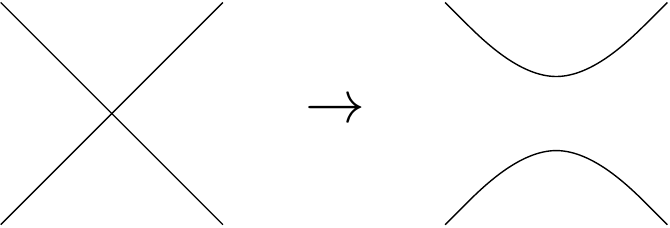}}
\caption{Resolution of a crossing.}\label{fig:resolution-of-crossing}
\end{figure}

\begin{proof}
Every homology class of $S$ can be represented by a union of closed curves.
One can remove all self-intersection points using the resolution shown in Fig.~\ref{fig:resolution-of-crossing}.
Then, since $S$ is connected, using the surgery shown in Fig.~\ref{fig:resolution-connecting}
we can transform the resulting union into a single curve.
Note that these operations do not change the homology class.
This shows that we can choose a collection of simple closed curves $C_i\subset S,\ i=1,\ldots,k$ in general position such that $[C_i]=a_i$.

\begin{figure}[h]
\center{\includegraphics{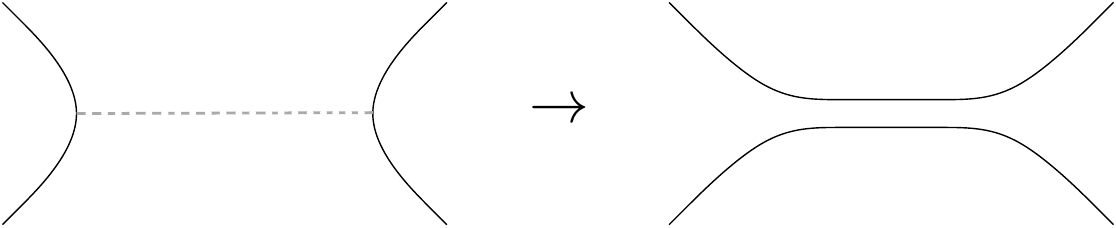}}
\caption{Surgery of a curve.}\label{fig:resolution-connecting}
\end{figure}

For all $i\ne j$ the curve $C_i$ intersects $C_j$ at an even number of points, since $a_i\smile a_j=0$.
We can remove all intersection points by induction.
Suppose $C_1,\ldots,C_{i-1}$ are disjoint simple closed curves.
Using the resolution shown in Fig.~\ref{fig:resolution-of-two-crossings}
we can change $C_i$ to a collection $C'$ of simple closed curves disjoint with all $C_j,\ j<i$.
Since the elements $a_1,\ldots,a_{i-1}$ are linearly independent,
the union $C_1\cup\ldots\cup C_{i-1}$ does not separate $S$.
Therefore via the operation from Fig.~\ref{fig:resolution-connecting} we can transform $C'$ to a single simple closed curve.
\end{proof}

\begin{figure}[h]
\center{\includegraphics{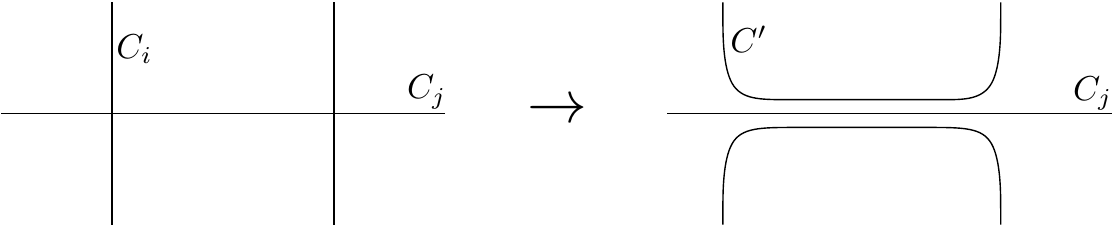}}
\caption{Resolution of two neighboring crossings.}\label{fig:resolution-of-two-crossings}
\end{figure}

Recall that {\it a crosscap} in a surface $S$ is an embedded M\"{o}bius band.
By a {\it crosscap decomposition} we mean a representation of a nonorientable surface as the sphere with attached crosscaps
(or equivalently, as the connected sum of several copies of $\R P^2$).
If some curve $C\subset S$ transversally intersects the middle line 
of an embedded M\"{o}bius band at a single point,
then we say that {\it $C$ goes through the crosscap}.

\begin{proposition}\label{pr:curve-through-crosscaps}
Let $S$ be a closed nonorientable surface of nonorientable genus $g$.
Let $C\subset S$ be a simple closed curve that changes the orientation.
Then either for any crosscap decomposition of $S$ the curve $C$ goes through every crosscap,
or for any odd $k<g$ there is a crosscap decomposition of $S$ such that $C$ goes through $k$ crosscaps.
\end{proposition}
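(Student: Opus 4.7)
The plan is to translate the problem into linear algebra over $\Z_2$. A crosscap decomposition with cores $\alpha_1,\dots,\alpha_g$ gives a basis $[\alpha_1],\dots,[\alpha_g]$ of $H_1(S;\Z_2)$ that is orthonormal for the mod-$2$ intersection form (each $\alpha_i$ is orientation-reversing, so $[\alpha_i]\cdot[\alpha_i]=1$; disjoint $\alpha_i,\alpha_j$ give $[\alpha_i]\cdot[\alpha_j]=0$). After isotoping each $\alpha_i$ within its class so that $|C\cap\alpha_i|$ equals the $\Z_2$-intersection number (by the crossing-resolution techniques of \S\ref{s:curves}), the number of crosscaps through which $C$ goes equals the Hamming weight of $[C]$ in this basis. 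A short linear-algebra calculation then shows that for every orthonormal basis $\{v_i\}$ of $(\Z_2^g,I_g)$ one has $\sum_i v_i=\mathbf 1$: writing $A$ for the matrix with columns $v_i$, the relation $AA^T=I$ gives $\sum_i A_{ji}=\sum_i A_{ji}^2=1$ for every $j$. Hence the condition $[C]=\sum_i[\alpha_i]$ is independent of the decomposition, and when it holds $C$ goes through every crosscap in every decomposition --- this is the first alternative.

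Assume now $[C]\ne\sum_i[\alpha_i]$, so $[C]$ has Hamming weight $w<g$. The heart of the argument is a \emph{four-crosscap move}: for a chosen four-tuple of indices $i_1,\dots,i_4$, replace the cores $\alpha_{i_1},\dots,\alpha_{i_4}$ by four new curves $\beta_1,\dots,\beta_4$ whose classes in the local basis $[\alpha_{i_1}],\dots,[\alpha_{i_4}]$ are the columns of the symmetric orthogonal involution
$$
R=\begin{pmatrix}1&1&1&0\\1&1&0&1\\1&0&1&1\\0&1&1&1\end{pmatrix}\in O(4,\Z_2).
$$
I would realize $\beta_1,\dots,\beta_4$ inside the subsurface $\Sigma\subset S$ obtained by gluing the M\"obius band neighborhoods of $\alpha_{i_1},\dots,\alpha_{i_4}$ to a planar pair-of-pants region (so $\Sigma$ is homeomorphic to a closed nonorientable genus-$4$ surface with one open disk removed), applying Proposition~\ref{pr:independent-curves} to the disk-capped closure $\Sigma\cup D$. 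An Euler-characteristic and connectivity check (of the same style as the one after Proposition~\ref{pr:ee-tcm}) confirms that $\beta_1,\dots,\beta_4$ together with the unaltered $\alpha_l$ ($l\notin\{i_1,\dots,i_4\}$) form a valid crosscap decomposition of $S$. Since $R$ swaps local Hamming weights $1$ and $3$, the global weight $w$ changes by $\pm 2$: to push $w$ up by $2$ I pick four indices with exactly one in $\mathrm{supp}\,[C]$ (feasible when $w\le g-3$), and to push $w$ down by $2$ I pick four indices with exactly three in $\mathrm{supp}\,[C]$ (feasible when $w\ge 3$). Iterating the moves, every odd $k$ with $1\le k<g$ is reachable in $\{1,3,\dots,g-1\}$ if $g$ is even and in $\{1,3,\dots,g-2\}$ if $g$ is odd.

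The main obstacle I expect is the geometric realization step: one must simultaneously realize $\beta_1,\dots,\beta_4$ as disjoint simple closed orientation-reversing curves in $\Sigma$ (with the prescribed homology classes) and arrange that $C$ meets each $\beta_j$ in the minimum geometric number of points, so that the homological Hamming-weight count matches the number of crosscaps $C$ passes through geometrically. Both tasks should reduce to the crossing-resolution techniques of \S\ref{s:curves} applied inside $\Sigma$, but care is needed to perform the resolutions without disturbing~$C$.
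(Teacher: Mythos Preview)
Your approach is genuinely different from the paper's and considerably more elaborate. The paper dispenses with the second alternative in three lines: if $[C]\ne w_1(S)$ then $S\setminus C$ is a once-punctured nonorientable surface of genus $g-1$, and so is $S'\setminus C'$ for a model curve $C'$ that goes through exactly $k$ crosscaps of a standard decomposition of a surface $S'\cong S$; the resulting homeomorphism of pairs $(S',C')\to(S,C)$ transports the decomposition. No moves, no linear algebra.

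Your linear-algebra identity $\sum_i[\alpha_i]=w_1(S)$ for every crosscap basis is correct and pleasant, and is morally the same as the paper's observation that $S\setminus C$ is orientable iff $[C]=w_1(S)$. But there is a real gap exactly where you flag it. The quantity you control throughout is the \emph{Hamming weight} of $[C]$ in the basis $\{[\alpha_i]\}$, whereas the statement concerns the \emph{geometric} count $\#\{i:|C\cap\alpha_i|=1\}$. These agree only if $|C\cap\alpha_i|\in\{0,1\}$ for every $i$, and the crossing-resolution techniques of \S\ref{s:curves} do not deliver this: the moves in Figures~\ref{fig:resolution-of-crossing}--\ref{fig:resolution-of-two-crossings} change the curve being resolved (its homology class or its number of components); they do not isotope a fixed simple curve into minimal position relative to another fixed curve. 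After your four-crosscap move you therefore have no mechanism to force the new cores $\beta_j$ to meet $C$ in at most one point while keeping the $\beta_j$ mutually disjoint and leaving $C$ untouched. Proposition~\ref{pr:independent-curves} produces disjoint representatives of the classes $[\beta_j]$ but says nothing about their position relative to~$C$, and there is no reason the minimal geometric intersection of $C$ with a curve in the class $[\beta_j]$ should equal the $\Z_2$-intersection number.

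The clean fix is precisely the paper's: instead of engineering the cores around a fixed $C$, note that the homeomorphism type of the pair $(S,C)$ is determined by that of the complement $S\setminus C$, which depends only on whether $[C]=w_1(S)$. This lets you replace $C$ by any convenient model curve and read off the desired decomposition directly, bypassing the realization problem entirely.
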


\begin{proof}
If $[C]=w_1(M)$, then the 2-manifold $S\setminus C$ is orientable.
Therefore there is no crosscap in $S$ disjoint with $C$.

If $[C]\ne w_1(M)$, then $S\setminus C$ is a nonorientable surface of nonorientable genus $g-1$ with one puncture.
Take another closed nonorientable surface $S'$ of nonorientable genus $g$ and
a simple closed curve $C'\subset S'$ that goes through $k$ crosscaps, where $k$ is odd and $k<g$.
Note that the surfaces $S\setminus C$ and $S'\setminus C'$ are homeomorphic.
So there is a homeomorphism $S'\to S$ that maps $C'$ to~$C$.
It remains to take the crosscap decomposition of $S$ induced from $S'$.
\end{proof}

\begin{figure}[h]
\center{\includegraphics
{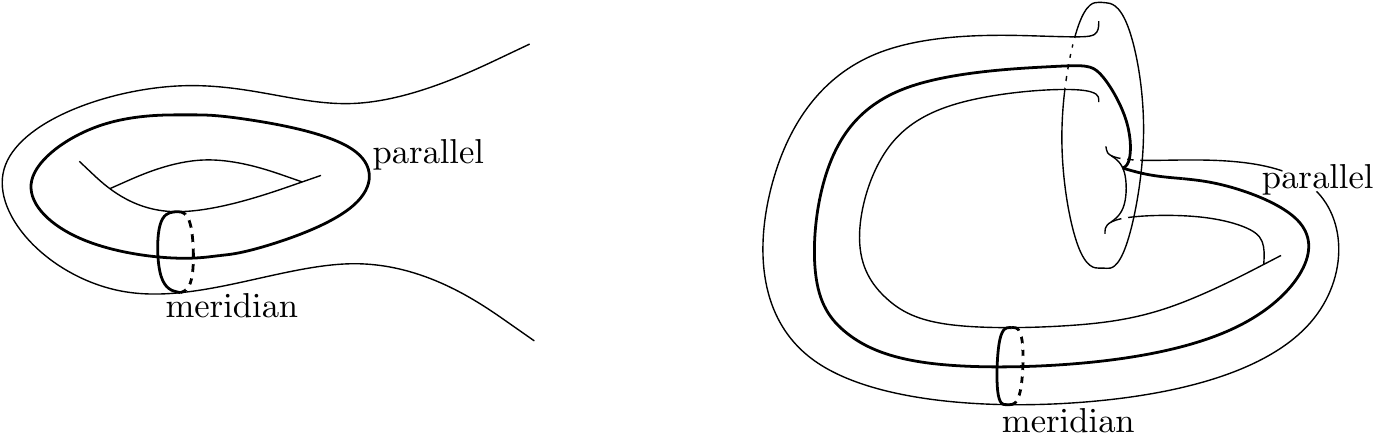}}
\caption{A parallel and a meridian of an orientable and of a nonorientable handle.}\label{fig:parallel-meridian}
\end{figure}

\begin{proposition}\label{pr:orientation-of-curves-maps}
Suppose $M,N$ are closed surfaces and $C\subset M$ is a meridian of a handle
(possibly orientable or not, see Fig.~\ref{fig:parallel-meridian}).
Let $f$ be the composition of a retraction of $M$ to a parallel of this handle and
a map of the parallel to some curve in $N$ that changes the orientation.
Then $f^*w_1(N)=[C]$.
\end{proposition}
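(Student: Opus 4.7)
The plan is to factor $f$ as $f=g\circ r$, where $r\colon M\to\beta$ is the given retraction onto the parallel $\beta$ of the handle and $g\colon\beta\to N$ is the map of $\beta$ onto an orientation-reversing closed curve in $N$. By functoriality of pullback,
$$f^*w_1(N)=r^*\bigl(g^*w_1(N)\bigr)\in H^1(M;\Z_2),$$
so the computation splits into two independent parts.

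Next I would compute $g^*w_1(N)\in H^1(\beta;\Z_2)\simeq\Z_2$. Since the image of $\beta$ under $g$ is a curve that reverses the orientation of $N$, we have $\langle g^*w_1(N),[\beta]\rangle=\langle w_1(N),g_*[\beta]\rangle=1$, so $g^*w_1(N)$ is the nonzero element $t\in H^1(\beta;\Z_2)$.

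The remaining and main task is to show $r^*(t)=[C]$. My approach would be to pair both sides with an arbitrary loop $\alpha\subset M$ in general position with $C$. On the one hand $\langle[C],\alpha\rangle\equiv|\alpha\cap C|\pmod 2$; on the other hand $\langle r^*(t),\alpha\rangle=\langle t,r_*\alpha\rangle$ is just the mod $2$ degree of the loop $r\circ\alpha\colon S^1\to\beta$. So it suffices to establish
$$\deg(r\circ\alpha)\equiv|\alpha\cap C|\pmod 2.$$

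The key geometric input, which I expect to be the main (though mild) obstacle to make rigorous, is that for the retraction $r$ the preimage of a regular value $q\in\beta$ is precisely a meridian of the handle, hence isotopic to $C$; outside the handle the retraction factors through the attaching points, so no extra preimages arise. Taking $q$ to be a regular value of $r\circ\alpha$ and applying the mod~$2$ definition of degree identifies $\deg(r\circ\alpha)\bmod 2$ with $|\alpha\cap r^{-1}(q)|\bmod 2=|\alpha\cap C|\bmod 2$, which completes the argument. Since $r^*(t)$ and $[C]$ agree on every class in $H_1(M;\Z_2)$, they coincide in $H^1(M;\Z_2)$, and combining with the first two steps gives $f^*w_1(N)=[C]$.
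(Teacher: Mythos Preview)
Your proof is correct and follows essentially the same route as the paper. The paper's proof is a two-sentence version of your argument: take a curve $C'\subset M$ in general position with $C$ and observe that $f(C')$ changes the orientation in $N$ if and only if $|C\cap C'|$ is odd, which is equivalent to $[C]\frown C'=1$. Your factorization $f=g\circ r$ and the regular-value computation of $\deg(r\circ\alpha)\bmod 2$ make explicit exactly why that ``if and only if'' holds, so you have written out the details the paper leaves implicit.
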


\begin{proof}
Here our strategy is very similar to the proof of Proposition~\ref{pr:w-tcm}.
Namely, take a curve $C'\subset M$ in general position with $C$.
Then $f(C')$ changes the orintation in $N$ if and only if $|C\cap C'|$ is odd,
and this holds if and only if $[C]\frown C'=1$.
\end{proof}

\begin{lemma}\label{l:genus-realizing}
Let $M,N$ be closed surfaces and $d\in\Z_{\ge0}$.
There exists a map $f:M\to N$ such that $f^*w_1(N)=w_1(M)$ and $\deg f=d$ if and only if
$\chi(M)\le d\cdot\chi(N)$
and one of the following conditions holds:
\begin{itemize}
\item both $M$ and $N$ are orientable;
\item $M$ is orientable, $N$ is nonorientable and $d$ is even;
\item both $M$ and $N$ are nonorientable and $\chi(M)\equiv d\cdot\chi(N)\mod2$.
\end{itemize}
\end{lemma}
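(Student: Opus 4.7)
I split the argument into necessity and sufficiency, with sufficiency handled in three cases according to orientability.

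\textit{Necessity.} The main device is the orientation double covers $\pi_M\colon\tilde M\to M$ and $\pi_N\colon\tilde N\to N$ (the trivial disjoint double cover when a surface is already orientable, and the usual two-sheeted cover otherwise). The hypothesis $f^{\ast}w_1(N)=w_1(M)$ is precisely what allows $f$ to lift to a map $\tilde f\colon\tilde M\to\tilde N$ between orientable surfaces, and a degree chase in the commutative covering square gives $\deg\tilde f=d$. Applying the classical Kneser inequality to $\tilde f$ yields $\chi(\tilde M)\le d\cdot\chi(\tilde N)$; since $\chi$ doubles under the orientation cover, this translates to $\chi(M)\le d\cdot\chi(N)$. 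The parity conditions follow similarly: if $M$ is orientable and $N$ is not, $f^{\ast}w_1(N)=0$ forces $f$ to factor as $f=\pi_N\circ g$, so $d=2\deg g$ is even; and the mod-$2$ condition in the both-nonorientable case comes from comparing the parity of the nonorientable genera of $M,N$ with those of the orientable genera of $\tilde M,\tilde N$ supplied by Kneser.

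\textit{Sufficiency, orientable--orientable and mixed cases.} In the both-orientable case, given $\chi(M)\le d\cdot\chi(N)$, I invoke the classical Hurwitz-type construction of a degree-$d$ branched covering $M_0\to N$ with $\chi(M_0)=\chi(M)$ (the excess $d\chi(N)-\chi(M)\ge 0$ is realizable as a total ramification index), so that $M_0\cong M$. In the mixed case ($M$ orientable, $N$ nonorientable, $d$ even), I apply the previous construction to obtain $g\colon M\to\tilde N$ of degree $d/2$ (the hypothesis $\chi(M)\le d\chi(N)$ becomes $\chi(M)\le(d/2)\chi(\tilde N)$ since $\chi(\tilde N)=2\chi(N)$), and then post-compose with $\pi_N$ to get $f=\pi_N\circ g$ of degree $d$ satisfying $f^{\ast}w_1(N)=g^{\ast}\pi_N^{\ast}w_1(N)=0=w_1(M)$.

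\textit{Sufficiency, the both-nonorientable case.} This is where I expect the main difficulty. The strategy is hybrid: I first produce an auxiliary degree-$d$ map $f_0\colon M\to N$ without yet controlling its $w_1$-pullback, for example by equivariantly lifting to the orientation covers and applying the orientable case to $\tilde M\to\tilde N$. Next let $\alpha:=w_1(M)-f_0^{\ast}w_1(N)\in H^1(M;\Z_2)$; by Proposition~\ref{pr:independent-curves} one represents $\alpha$ as a sum of classes of disjoint nonseparating simple closed curves $C_1,\ldots,C_r$, each of which is the meridian of a handle of $M$. On a tubular neighborhood of each such handle---where, by pre-arrangement, the local degree of $f_0$ has been made zero---I replace $f_0$ by the composition of a retraction onto the parallel with a map to an orientation-reversing curve of $N$, applying Proposition~\ref{pr:orientation-of-curves-maps}: this adds $[C_i]$ to the $w_1$-pullback while keeping the global degree at $d$. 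Arranging the local modifications so as to produce exactly the class $\alpha$ while preserving both the orientability bookkeeping and the degree is the delicate step, and this is the point at which the mod-$2$ parity hypothesis $\chi(M)\equiv d\chi(N)\pmod 2$ is essential.
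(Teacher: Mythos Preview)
Your necessity argument and the first two sufficiency cases are essentially the paper's. Two points, however, are genuinely incomplete.

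\textbf{Necessity of the parity condition.} Kneser gives an inequality, not a congruence; ``comparing parities supplied by Kneser'' is not an argument. The paper instead squares the hypothesis: from $f^*w_1(N)=w_1(M)$ one gets $f^*w_2(N)=w_2(M)$ via the Wu relation $w_1(S)^2=w_2(S)$ for surfaces, and then reads off $\chi(M)\equiv d\cdot\chi(N)\pmod 2$ since $f^*$ on $H^2(\,\cdot\,;\Z_2)$ is multiplication by $d\bmod 2$. You should replace your sentence with this.

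\textbf{Sufficiency when both are nonorientable.} Your ``fix-up'' scheme has two real gaps. First, producing the seed map $f_0$ of degree $d$ by ``equivariantly lifting to the orientation covers'' is circular: to lift equivariantly you already need a map $M\to N$, and a generic degree-$d$ map $\tilde M\to\tilde N$ has no reason to commute with the deck involutions. Second, the correction step is underspecified. To apply Proposition~\ref{pr:orientation-of-curves-maps} you need $\alpha=w_1(M)+f_0^*w_1(N)$ to be a sum of classes of \emph{two-sided} nonseparating curves (meridians of handles), i.e.\ you need $\alpha^2=0$; but $\alpha^2=w_2(M)+f_0^*w_2(N)$ depends on $f_0$ and need not vanish. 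Even when it does, you must arrange the local degree of $f_0$ to be zero on each handle you overwrite, and you have not said how; nor have you explained where exactly the parity hypothesis enters to make all this possible.

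The paper sidesteps these difficulties by direct construction in three subcases: (i) $N=\R P^2$, where one writes $M=M'\#\R P^2$ or $M'\#\R P^2\#\R P^2$ according to the parity of $\chi(M)$, collapses to a wedge, and maps each $\R P^2$ by a homeomorphism and $M'$ by an even-degree map to $\R P^2$; (ii) $\chi(N)\le 0$ and $d>0$, where the parity hypothesis lets one split $M=M'\# M''$ with $M'$ nonorientable of Euler characteristic exactly $d\cdot\chi(N)$, and $f$ is the projection $M\to M'$ followed by an honest $d$-fold cover $M'\to N$; (iii) $d=0$, where $M=K\# M''$ (Klein bottle summand, again using parity), and $f$ retracts onto a parallel of $K$ and sends it to an orientation-reversing loop in $N$. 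In each case $f^*w_1(N)=w_1(M)$ is immediate from the construction, and no post-hoc surgery is needed.
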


\begin{proof}
For closed orientable surfaces the bound $\chi(M)\le d\cdot\chi(N)$
is a well-known fact, see \cite{kneser} and \cite{edmonds}.
One can generalize this bound to possibly nonorientable $M,N$
by considering the orientable 
covers $\widetilde M\to M$ and $\widetilde N\to N$
and the corresponding map  $\widetilde f:\widetilde M\to\widetilde N$.

\vspace{.5em}

Suppose both $M$ and $N$ are orientable.
Every map of degree $d$ such that $\chi(M)\le d\cdot\chi(N)$
can be realized as the composition of a map that collapses some handles in $M$ and a (possibly branched) covering map.

\vspace{.5em}

Suppose $M$ is orientable and $N$ is nonorientable.
Then every $f:M\to N$ such that $f^*w_1(N)=w_1(M)$ can be lifted to the orientable double cover $\widetilde N\to N$, so $\deg f$ must be even.
To find a degree $d$ map $M\to N$
we take the composition of a degree $\frac d2$ map $M\to\widetilde N$ from the previous case
and the projection $\widetilde N\to N$.

\vspace{.5em}

Suppose $M$ is nonorientable and $N$ is orientable.
Then there are no maps $f:M\to N$ such that $f^*w_1(N)=w_1(M)$.

\vspace{.5em}

Finally, suppose both $M$ and $N$ are nonorientable.
Note that for every closed surface $S$ we have $w_1(S)^2=w_2(S)$.
Indeed, $w_1(S)^2\ne0$ if and only if $S$ is a connected sum of an odd number of copies of $\R P^2$, which is equivalent to $w_2(S)\ne 0$.
(The equality $w_1(M)^2=w_2(M)$ can also be deduced from Wu's formula, see e.\,g.~\cite[p.\,131]{milnor-stasheff}.)
So if $f^*w_1(N)=w_1(M)$, then $w_2(M)=f^*w_2(N)$.
Therefore if $w_2(N)=0$, then $w_2(M)=0$.
And if $w_2(N)\ne0$, then by Proposition~\ref{pr:ee-equals-w-mod2} we have
$w_2(M)=\deg f\cdot \alpha$, where $\alpha\in H^2(M;\Z_2)$ is the generator.
We have proved that $\chi(M)\equiv d\cdot\chi(N)\mod2$.

It remains to prove the `if' part in the case when $M$ and $N$ are both nonorientable.
We consider the following three subcases.

Suppose $N=\R P^2$ and $d$ is a positive integer such that $d\equiv\chi(M)\mod2$.
Let $M'$ be a closed orientable surface such that $M=M'\#\R P^2$ or $M'\#\R P^2\#\R P^2$,
depending on whether $\chi(M)$ is odd or even.
%
If we collapse the circles along which the components of the connected sum are glued,
then we obtain a projection to the wedge sum.
So, let $f_1$ be the projection of $M$ to $M'\vee \R P^2$ (respectively to $M'\vee \R P^2\vee\R P^2$).
We set $f_2:M'\vee \R P^2\to N$ (respectively $f_2:M'\vee \R P^2\vee\R P^2\to N$)
to be a homeomorphism on $\R P^2$'s and an arbitrary map of degree ${d-1}$ (respectively ${d-2}$) on $M'$.
Such a map exists since $d-1$ (respectively $d-2$) is even.
Then $f=f_2\circ f_1$ has degree $d$ and $f^*w_1(N)=w_1(M)$.

Suppose $\chi(N)\le0$ and $d$ is a positive integer such that $\chi(M)\le d\cdot\chi(N)$ and $\chi(M)\equiv d\cdot\chi(N)\mod2$.
Then $M$ can be viewed as the connected sum of
a closed non\-orientable surface $M'$ and a closed orientable surface $M''$
such that $\chi(M')=d\cdot\chi(N)$ and $\chi(M'')=\chi(M)-d\cdot\chi(N)+2$.
We set $f$ to be the composition of the projection $M\to M'$ and a $d$-fold covering $M'\to N$.
Then $f$ has degree $d$ and $f^*w_1(N)=w_1(M)$.

Finally, suppose 
$d=0$.
Since $\chi(M)$ is even, $M$ can be viewed as the connected sum of the Klein bottle $K$ and a closed orientable surface $M'$.
Let $C\subset K$ be any parallel, 
i.\,e.\ a simple closed curve that changes the orientation.
We set $f$ to be the composition of a retraction $M\to C$ and a map of $C$ to some curve in $N$ that changes the orientstion.
Then $f^*w_1(N)=w_1(M)$ 
by Proposition~\ref{pr:orientation-of-curves-maps}.
\end{proof}

\subsection{Proof of Theorem \ref{th:existence}}\label{s:proof-existence}

\begin{proof}
First we prove the `only if' part.
Suppose there is a $(C,P)$-immersion $f:M\to N$.
We need to check that $M,N,C,P$ satisfy conditions 2.1--2.4 of Theorem~\ref{th:existence}.

By Theorem~\ref{th:homotopic} we have $[C]=w_1(M)+f^*w_1(N)$ and $[P]=w_2(M)+f^*w_2(N)$.
Recall that for every closed surface $S$ we have $w_1(S)^2=w_2(S)$.
So we have
$$[P] = w_2(M)+f^*w_2(N) = w_1(M)^2 + f^*(w_1(N)^2) = w_1(M)^2 + ([C]+w_1(M))^2 = [C]^2.\eqno (*)$$
This gives us condition 2.1.

Conditions 2.2 and 2.3 follow from conditions 1.1 and 1.3 of Theorem~\ref{th:homotopic}.
We set $d=\deg f$.
Then the inequality $\chi(M)\le |d|\cdot\chi(N)$ and the congruence $\deg f\equiv0\mod2$ in case $M$ is orientable and $N$ is nonorientable
hold in view of Lemma \ref{l:genus-realizing}.

Suppose $M,N,C,P$ satisfy the hypotheses of condition 2.4, i.\,e.\ $[C]\ne0$, $w_1(M)\ne0$ and $[C]^2\ne w_2(M)$.
It follows from conditions 1.2 and 2.1 that $f^*w_2(N)\ne0$.
So $w_2(N)\ne0$.

By Proposition~\ref{pr:basis-curves}, there exists a basis $b_1,\ldots,b_g\in H^1(N,\Z_2)$
such that $b_i^2=w_2(N)$ and $b_i\smile b_j=0$ for $i\ne j$.
Denote the elements $f^*(b_i)\in H^1(M,\Z_2)$ by $a_i$.
Since $f^*w_2(N)\ne0$, we have $a_i^2\ne0$ and $a_i\smile a_j=0$ for $i\ne j$.
Therefore none of $a_i$ does not equal to the linear combination of the others.

Then by Proposition~\ref{pr:independent-curves}
one can choose a collection of disjoint simple closed curves $C_i\subset M,\ i=1,\ldots,g,$ such that $[C_i]=a_i$.
A tubular neighborhood of every $C_i$ is a crosscap.
By condition~1.1 we have
$$\sum_{i=1}^g a_i = \sum_{i=1}^g f^*(b_i) = f^*w_1(N) = w_1(M)+[C],$$
but $w_1(M)+[C] \ne w_1(M)$ by the hypotheses of condition~2.4.
Therefore 
$\sum_{i=1}^g a_i \ne w_1(M)$, so $M$ has more then $g$ crosscaps,
which implies $\chi(N)>\chi(M)$.
This completes the proof of condition 2.4.

\vspace{.5em}


The proof of the `if' part also uses Theorem~\ref{th:homotopic}.
Namely, for $M,N,C,P$ which satisfy conditions 2.1--2.4 of Theorem~\ref{th:existence}
we will construct a map $f:M\to N$ that satisfies the conditions of Theorem~\ref{th:homotopic}.
In each of the following cases conditions 1.1 and 1.3 are easy to check,
and condition 1.2 follows from 1.1 and 2.1, see $(*)$.

I. {\it $N$ is orientable}.
By condition 2.2 we have $[C]=w_1(M)$, so condition 1.1 is allways satisfied.
If $M$ is nonorientable, we need not to check condition 1.3, we can take $f$ to be a null-homotopic map.
If $M$ is orientable, set $f$ to be any map of degree $d$.
Such a map exists by Lemma \ref{l:genus-realizing}.
Then condition 1.3 follows from 2.3.

II. {\it $N$ is nonorientable, $w_1(M)=[C]=0$}.
As in the previous case, set $f$ to be any map of degree~$d$.
We can apply Lemma \ref{l:genus-realizing} since $d$ is even by condition 2.3.

III. {\it $N$ is nonorientable, $w_1(M)=[C]\ne0$}.
We can take $f$ to be a null-homotopic map; the details are  similar to the first case.

IV. {\it $N$ is nonorientable, $[C]\ne0=w_1(M)$}.
In this case there exists a homeomorphism $f_1$ from $M$ to the standard orientable surface $M'\subset\R^3$
such that the image of $C$ is homologous to a meridian $C'$ of a handle (see Fig.~\ref{fig:parallel-meridian},~left).
Let $f_2$ be a retraction of $M'$ to a parallel of this handle and
let $f_3$ be a map of the parallel to some curve in $N$ that changes the orientation.
Set $f$ to be the composition $f_3\circ f_2\circ f_1$.
In this case we do not need to check condition 1.3,
and condition 1.1 
follows from Proposition~\ref{pr:orientation-of-curves-maps}.


V. {\it $N$ is nonorientable, $[C]=0\ne w_1(M)$}.
Note that $|P|=n_+ + n_- \equiv n_+ - n_-\mod2$,
so $n_+ - n_- \equiv 0\mod2$ by condition 2.1.
Using condition 2.3 we can see that $\chi(M)-d\cdot\chi(N)\equiv0\mod2$.
Then by Lemma \ref{l:genus-realizing} there is a degree $d$ map $f:M\to N$.
For this map conditions 1.1 and 1.3 hold by construction.

VI. {\it $N$ is nonorientable, $0\ne[C]\ne w_1(M)\ne0$}.
Let $C'$ be a simple closed curve in $M$ such that $[C']=[C]+w_1(M)$.
The curve $C'\subset M$ either preserves the orientation, or changes the orientation.

If $C'$ preserves the orientation, then $M\setminus C'$ is a surface with two punctures.
Therefore $C'\subset M$ is a meridian of some orientable or nonorientable handle, see Fig.~\ref{fig:parallel-meridian}.
This means that $M$ can be viewed as a connected sum of the torus, respectively Klein bottle, and some other closed surface.
We set $f$ to be the composition of a retraction of $M$ to a parallel of the handle and
a map of the parallel to some curve in $N$ that changes the orientation.
Then condition 1.1 
follows from Proposition~\ref{pr:orientation-of-curves-maps}.

If $C'$ changes the orientation, then $0\ne[C']^2=[C]^2+w_1(M)^2=[C]^2+w_2(M)$.
Therefore our data satisfies the hypotheses of condition 2.4,
so the nonorientable genus of $N$, call it $k$, is odd and it is strictly less than the nonorientable genus of $M$.
By Proposition \ref{pr:curve-through-crosscaps} there is a crosscap decomposition of $M$ such that $C'$ goes through $k$ of the crosscaps.
If we collapse the other crosscaps in $M$, then the resulting surface $M'$ is homeomorphic to $N$ and the pullback of $w_1(M')$ equals $[C']$.
Then we set $f$ to be the composition of the projection $M\to M'$ and a homeomorphism $M'\to N$.
\end{proof}


Faculty of Mathematics
\nopagebreak

National Research University
\nopagebreak

Higher School of Economics
\nopagebreak

Moscow, Russia
\nopagebreak

\verb"ryabichev@179.ru"

\end{document}